\documentclass{amsproc}

\usepackage{enumerate}

\usepackage{amssymb,caption,epigraph,graphicx,mathabx,mathrsfs,mathtools,psfrag,relsize,tikz,upgreek,xcolor,xparse}

\usetikzlibrary{calc,decorations.pathmorphing,shapes}

\definecolor{refkey}{rgb}{1,.5,.5}
\definecolor{labelkey}{rgb}{1,.5,.5}

\usepackage[OT2,OT1]{fontenc}

\usepackage{scalerel}

\usepackage{stackengine}
\stackMath

\DeclareRobustCommand\cyr{%
  \renewcommand\rmdefault{wncyr}%
  \renewcommand\sfdefault{wncyss}%
  \renewcommand\encodingdefault{OT2}%
  \normalfont
  \selectfont}

\DeclareTextFontCommand{\textcyr}{\cyr}

\DeclareSymbolFont{euleroperators}{U}{eur}{m}{n}
\SetSymbolFont{euleroperators}{bold}{U}{eur}{b}{n}

\makeatletter
\renewcommand{\operator@font}{\mathgroup\symeuleroperators}
\makeatother

\usepackage{fancyvrb}

\makeatletter
\newcommand{\raisemath}[1]{\mathpalette{\raisem@th{#1}}}
\newcommand{\raisem@th}[3]{\raisebox{#1}{$#2#3$}}
\makeatother

\makeatother

\makeatletter
\let\c@equation\c@figure
\makeatother

\makeatletter
\def\@seccntformat#1{%
  \protect\textup{\protect\@secnumfont
    \ifnum\pdfstrcmp{subsection}{#1}=0 \bfseries\fi
    \csname the#1\endcsname
    \protect\@secnumpunct
  }%
}
\makeatother

\makeatletter
\def\@tocline#1#2#3#4#5#6#7{\relax
   \ifnum #1>\c@tocdepth 
   \else
     \par \addpenalty\@secpenalty\addvspace{#2}%
     \begingroup \hyphenpenalty\@M
     \@ifempty{#4}{%
       \@tempdima\csname r@tocindent\number#1\endcsname\relax
     }{%
       \@tempdima#4\relax
     }%
     \parindent\z@ \leftskip#3\relax \advance\leftskip\@tempdima\relax
     \rightskip\@pnumwidth plus4em \parfillskip-\@pnumwidth
     #5\leavevmode\hskip-\@tempdima #6\nobreak\relax
     \ifnum#1<0\hfill\else\dotfill\fi\hbox to\@pnumwidth{\@tocpagenum{#7}}\par
     \nobreak
     \endgroup
   \fi}
\makeatother

\NewDocumentCommand{\bn}{sO{}m}{%
  {\IfBooleanTF{#1}
    {\oldnormaux{\left|}{\right|}{#3}}
    {\oldnormaux{#2|}{#2|}{#3}}}
}
\makeatletter
\newcommand{\oldnormaux}[3]{\mathpalette\oldnormaux@i{{#1}{#2}{#3}}}
\newcommand{\oldnormaux@i}[2]{\oldnormaux@ii#1#2}
\newcommand{\oldnormaux@ii}[4]{%
  \sbox\z@{$\m@th#1#2#4#3$}%
  \sbox\tw@{$\m@th\|$}%
  \mathopen{\hbox to\wd\tw@{\hss\vrule height \ht\z@ depth \dp\z@ width .2\wd\tw@\hss}}%
  #4
  \mathclose{\hbox to\wd\tw@{\hss\vrule height \ht\z@ depth \dp\z@ width .2\wd\tw@\hss}}%
}
\makeatother

\setcounter{secnumdepth}{3}

\newtheorem{thm}[equation]{Theorem}

\newtheorem{lem}[equation]{Lemma}

\newtheorem{prp}[equation]{Proposition}

\newcommand{\thistheoremname}{}

\newtheorem*{genericthm*}{\thistheoremname}
\newenvironment{namedthm*}[1]
  {\renewcommand{\thistheoremname}{#1}%
   \begin{genericthm*}}
  {\end{genericthm*}}

\theoremstyle{definition}
\newtheorem{dfn}[equation]{Definition}

\theoremstyle{remark}
\newtheorem{rem}[equation]{Remark}

\newcommand{\thmref}[1]{Theorem~\ref{#1}}
\newcommand{\prpref}[1]{Proposition~\ref{#1}}
\newcommand{\lemref}[1]{Lemma~\ref{#1}}

\newcommand{\dfnref}[1]{Definition~\ref{#1}}
\newcommand{\remref}[1]{Remark~\ref{#1}}

\newcommand{\figref}[1]{Figure~\ref{#1}}
\newcommand{\secref}[1]{Section~\ref{#1}}

\DeclareRobustCommand\cyr{%
  \renewcommand\rmdefault{wncyr}%
  \renewcommand\sfdefault{wncyss}%
  \renewcommand\encodingdefault{OT2}%
  \normalfont
  \selectfont}

\newcommand{\conv}[1]
{\vskip -.3cm $$ {\vbox{\vrule width 1mm \hskip 4mm \hbox{\parbox{10cm}{ {#1}
}}}\hfill} $$}

\newcounter{sarrow}
\newcommand\xrsquigarrow[1]{%
\stepcounter{sarrow}%
\mathrel{\begin{tikzpicture}[baseline= {( $ (current bounding box.south) + (0,-0.5ex) $ )}]
\node[inner sep=.5ex] (\thesarrow) {$\scriptstyle #1$};
\path[draw,<-,decorate,
  decoration={zigzag,amplitude=0.7pt,segment length=1.2mm,pre=lineto,pre length=4pt}]
    (\thesarrow.south east) -- (\thesarrow.south west);
\end{tikzpicture}}%
}

\DeclareMathOperator{\env}{env}

\newcommand{\acts}{\righttoleftarrow}
\newcommand{\Af}{\mathfrak{A}}
\newcommand{\al}{\alpha}

\newcommand{\be}{\beta}

\newcommand{\De}{\Delta}
\newcommand\de{\delta}

\newcommand\eb{\boldsymbol{\epsilon}}
\newcommand{\Ec}{\mathcal E}
\newcommand\ep{\varepsilon}
\newcommand{\Ef}{\mathfrak{E}}

\newcommand{\Gb}{\mathbf{G}}
\newcommand{\gb}{\mathbf{g}}
\newcommand\Gd{\Gb^{\,(2)}}

\newcommand{\ka}{\varkappa}

\newcommand{\la}{\lambda}
\newcommand{\lab}{\boldsymbol{\lambda}}
\newcommand\lb[1]{\mathord{\upharpoonright}#1\mathord{\upharpoonleft}}
\newcommand\lbb[1]{\mathord{\raisebox{-.5pt}{$\mathlarger{\mathlarger{\upharpoonright}}$}}#1\mathord{\raisebox{-.5pt}{$\mathlarger{\mathlarger{\upharpoonleft}}$}}}

\newcommand{\mapstoto}{\mathop{\,\mathrel{\mapstochar}\kern2pt\joinrel\relbar\kern-1pt\joinrel\relbar\joinrel\rightsquigarrow\,}}

\newcommand{\mf}{\mathfrak{m}}
\newcommand\Mor{\operatorname{\mathsf{Mor}}}
\newcommand{\mub}{\boldsymbol{\mu}}
\newcommand\myp{\mkern-.0mu\raise0.4ex\hbox{$\scriptstyle\prime$}}

\newcommand\no[1]{\textbf{#1$^{\mathbf{o}}$.}}
\newcommand\nm[1]{\textbf{#1$^{\mathbf{o}}$}}

\newcommand{\Obj}{\operatorname{\mathsf{Obj}}}
\newcommand{\Om}{\Omega}
\newcommand{\om}{\omega}
\newcommand{\ov}{\overline}

\newcommand{\Pb}{\mathsf P}
\newcommand{\Pc}{\mathcal P}
\newcommand{\Pca}{\Pc_{\la}}
\newcommand{\pib}{\boldsymbol{\pi}}
\newcommand\pig[1]{\scalerel*[5pt]{\big#1}{%
  \ensurestackMath{\addstackgap[1.5pt]{\big#1}}}}
\newcommand\pigl[1]{\mathopen{\pig{#1}}}
\newcommand\pigr[1]{\mathclose{\pig{#1}}}

\newcommand{\si}{\sigma}
\newcommand\so{\operatorname{\mathsf{s}}}

\newcommand\ta{\operatorname{\mathsf{t}}}
\renewcommand{\th}{\theta}
\newcommand{\thb}{{\boldsymbol{\theta}}}

\newcommand{\un}{\underline}
\newcommand{\uun}[1]{\raisemath{1.1pt}{\un{#1}}}

\newcommand{\ZZ}{\mathbb Z}

\begin{document}

\title[Amenability of groupoids]{Amenability of groupoids and \\ asymptotic invariance \\ of convolution powers}

\author{Theo B\"uhler}
\address{T.B.: Z\"urich, Switzerland}
\email{math@theobuehler.org}

\author{Vadim A.\ Kaimanovich}
\address{V.K.: Department of Mathematics and Statistics, University of
Ottawa, 150 Louis Pasteur, Ottawa ON, K1N 6N5, Canada}
\email{vkaimano@uottawa.ca, vadim.kaimanovich@gmail.com}

\begin{abstract}
The original definition of amenability given by von Neumann in the highly non-constructive terms of means was later recast by Day using approximately invariant probability measures. Moreover, as it was conjectured by Furstenberg and proved by Kaimanovich\,--\,Vershik and Rosenblatt, the amenability of a locally compact group is actually equivalent to the existence of a single probability measure on the group with the property that the sequence of its convolution powers is asymptotically invariant. In the present article we extend this characterization of amenability to measured groupoids. It implies, in particular, that the amenability of a measure class preserving group action is equivalent to the existence of a random environment on the group parameterized by the action space, and such that the tail of the random walk in almost every environment is trivial.
\end{abstract}

\maketitle

\thispagestyle{empty}

\epigraph{And though absolute justice \newline be unattainable, as much justice \newline as we need for all practical use \newline is attainable by all those \newline who make it their aim.}%
{\textit{Unto This Last}\\ \textsc{John Ruskin}}

\section*{Introduction}

\no1 The origins of amenability\,---\,Lebesgue's and Hausdorff's measure problems, the paradoxes of Hausdorff and Banach\,--\,Tarski, the solution by Banach of the Hausdorff problem in low dimensions\,---\,left their distinctive hallmark during the early years. Von Neumann (who formulated his definition in 1929 \cite{vonNeumann29}) and other pioneers were, of course, perfectly aware of the propinquity between the Banach\,--\,Mazur limits and Ces\`aro's averaging, yet they felt no need to leave the heights of newfound Cantor's paradise.

Perfect invariance being unattainable in the lowly world of averages and probabilities, one can try to make do with an \emph{approximate} one. It is the latter\,---\,much more constructive\,---\,avenue that was explored from the late 40s through the mid-60s in several directions and resulted in a plethora of necessary and sufficient conditions for amenability.

The beginning of this new period is marked by the appearance of the English term \emph{amenability} coined by Day in 1949 \cite{Day49} (the full version
\cite{Day50} appears in 1950). In this announcement he introduces the following ``strong amenability'' condition on a \emph{discrete group} $G$: there exists a net (a sequence, if $G$ is countable) of finitely supported probability measures~$\th_n$ on $G$ which is (strongly) \emph{asymptotically invariant} in the sense that
\begin{equation} \label{eq:day}
\| g\th_n - \th_n \| \to 0 \qquad \forall\, g\in G \;.
\end{equation}
The fact that this ``strong'' amenability is actually equivalent to the ``plain'' one is established by Day in his 1957 follow-up paper \cite[Theorem~1]{Day57}.

In the meantime Reiter and F{\o}lner (independently both of Day and of each other) introduce their respective conditions as well \footnotemark.

\footnotetext{\,Although in modern expositions Day is somewhat eclipsed by Reiter and F{\o}lner, his contribution was undeniably recognized by the contemporaries. One reason for this shift might be that in spite of the prominent presence of the results from \cite{Day50, Day57} in Greenleaf's trendsetting book \cite{Greenleaf69}\,---\,namely, in \S 2.4 entitled \emph{The celebrated method of Day}, which is precisely how Day's work is referred to by Hulanicki \cite[p.\ 88]{Hulanicki66}\,---\,formula \eqref{eq:day} explicitly appears in \cite{Greenleaf69} only in \S 3.2 called \emph{Reiter's work in harmonic analysis (Reiter's condition)}.}

F{\o}lner's condition \cite[Main theorem (1)]{Folner55} is essentially an ``$\ep$-form'' of Day's condition~\eqref{eq:day} with the important difference, though, that the measures $\th_n$ are \linebreak additionally required to be uniform on their supports (as this condition, unlike~\eqref{eq:day}, is formulated in terms of finite subsets of the group). Its predecessor (formulated using the Haar measure of the involved sets) appeared in Dixmier's earlier paper \cite[p. 221]{Dixmier50} as a sufficient condition for the existence of an invariant mean on the $L^\infty$ space of a locally compact group. F{\o}lner\,---\,unaware of both \cite{Day50} and \cite{Dixmier50}\,---\,works in the same setup of discrete groups as von Neumann and Day, and proves the equivalence of his condition to amenability. Day, in addition to his own argument, mentions in \cite{Day57} that the characterization of amenability by condition \eqref{eq:day} is also a consequence of F{\o}lner's criterion.

Reiter's condition for locally compact \emph{topological groups}\,---\,the uniform convergence on compact sets in \eqref{eq:day} with absolutely continuous measures $\th_n$\,---\,first appears in a rather cumbersome notation in his paper \cite[formula (ii$'$) on~p.~405]{Reiter52}, \linebreak where it is established for abelian groups. In 1960 it is more explicitly restated in \cite[Lemma~1]{Reiter60} and popularized by Dieudonn\'e \cite[p.~284]{Dieudonne60} under the telling name ``property~(P$_1$)''. At the time neither of them is aware of any links with amenability; it is first mentioned by Reiter in the 1965 paper \cite{Reiter65} where he proves the equivalence of condition (P$_1$) to the existence of a topological invariant mean on the group.

\medskip

\no2 In the late 70s the notion of amenability is extended \emph{beyond groups} to actions, equivalence relations, and, more generally, to \emph{groupoids} (see Connes \cite{Connes94} and Weinstein \cite{Weinstein96} for their physical, algebraic, geometrical and analytical \emph{raisons d'\^etre} and a passionate apology for the groupoid cause).

The forerunners of this development are a number of examples in which actions of non-amenable groups\,---\,like, for instance, the boundary action of the group $PSL(2,\ZZ)$, see Bowen \cite{Bowen77} and Vershik \cite{Vershik78}\,---\,exhibit properties similar to amenability. The original definition of an \emph{amenable action} is given by Zimmer \cite[Definition 1.4]{Zimmer78} using a rather involved fixed point property (which is the main \emph{application} of amenability, see footnote 2 on next page, although this property is quite inconvenient for \emph{establishing} amenability). It is further carried over to \emph{equivalence relations} and reformulated in terms of means in \cite[\mbox{Proposition~4.1}]{Zimmer77}. The fact that \emph{groupoids} provide a natural setup for Zimmer's definition is immediately pointed out already in the review of his paper \cite{Combes77}. A systematic implementation of this is done by Renault \cite{Renault80} who also establishes several approximate invariance conditions equivalent to the amenability of a \emph{measured groupoid}.

\medskip

\no3 Furstenberg, who introduces the \emph{Poisson boundary} of a random walk in 1963, immediately notices and uses its relationship with amenability \cite{Furstenberg63a, Furstenberg63} \footnotemark. Yet the fact that the \emph{Liouville property} of a random walk ($\equiv$ the \mbox{absence} of non-constant bounded harmonic functions $\equiv$ the triviality of the \mbox{Poisson} \mbox{boundary}) implies the amenability of the underlying group is first explicitly stated several years later by Azencott \cite[Proposition 1]{Azencott69}, \cite[Proposition II.1 and the Corollaire on p.~43]{Azencott70}\,---\,if heavily relying on the work of Furstenberg (a more direct argument in \cite[Section 9]{Furstenberg73} is also based on the fixed point property characterization of amenability). On the other hand, amenable groups may also carry non-Liouville random walks, and Furstenberg conjectures \cite[p.~213]{Furstenberg73} that a group is amenable if and only if it admits a Liouville random walk.

\footnotetext{\,Apparently, Furstenberg was not aware of the notion of amenability at the time of writing \cite{Furstenberg63a, Furstenberg63}. Instead, he argued in terms of the \emph{fixed point property} introduced by him \emph{\`a la} Markov\,--\,Kakutani in  \cite[Definition~1.4]{Furstenberg63}\,---\,the existence of a fixed point for any affine action of a given group on a compact convex set in a locally convex linear topological space. Actually, this property had already been considered in 1961 by Day \cite{Day61} who showed that it follows from the amenability of the group. Moreover, this implication is essentially contained in the 1939 note of Bogolyubov \cite{Bogolyubov39} that remained virtually unknown until the 90s, see Anosov \cite{Anosov94} and Grigorchuk\,--\,de~la~Harpe \cite{Grigorchuk-delaHarpe17}. The proof of the converse implication\,---\,that the Markov\,--\,Kakutani fixed point property implies amenability\,---\,indicated by Day in \cite[Theorem 3]{Day61} in the generality of topological groups contains a gap (and in the original form only works for discrete groups) ultimately filled in by Rickert in 1967 \cite{Rickert67}.}

In view of Day's condition \eqref{eq:day}, a much more direct link with amenability is provided by the general fact, a consequence of Derriennic's \emph{0-2 laws} \cite{Derriennic76}, that for any Markov chain the Liouville property is equivalent to the asymptotic independence of its one-dimensional distributions (or their Ces\`aro averages, to be more precise) of the initial position of the chain. Therefore, in the group case the Liouville property not only implies amenability, but also gives rise to an explicit asymptotically invariant sequence \eqref{eq:day} in the most ``economical'' way: one just has to provide a single measure~$\mu$ (the step distribution of the random walk), and then the whole sequence is obtained from $\mu$ by using the convolution operation provided by the intrinsic group structure, see Kaimanovich\,--\,Vershik \cite[Theorem~4.2]{Kaimanovich-Vershik83}.

An equally direct (if entirely non-constructive) approach consists in using a \emph{measure-linear mean} to provide an equivariant projection from the space of bounded functions on the state space onto constants, see Connes\,--\,Feldman\,--\,Weiss \cite[Proposition 20]{Connes-Feldman-Weiss81}, Lyons\,--\,Sullivan \cite[Theorem 3$'$]{Lyons-Sullivan84}, Kaimanovich\,--\,Fisher \cite[Theorem~1]{Kaimanovich-Fisher98}.

Furstenberg's conjecture was proved by Vershik and the second author \cite{Vershik-Kaimanovich79,Kaimanovich-Vershik83}, and, independently, by Rosenblatt \cite{Rosenblatt81} by constructing, for any locally compact amenable group $G$, an absolutely continuous probability measure~$\mu$ on $G$ with the property that the sequence of its convolution powers satisfies Day's condition~\eqref{eq:day}. The proof in \cite{Kaimanovich-Vershik83} uses Reiter's condition (see Forghani\,--\,Kaimanovich \cite{Forghani-Kaimanovich15p} for its interpretation in the probabilistic language of stopping times), whereas a similar but more complicated argument in \cite{Rosenblatt81} is based on a topological analogue of F{\o}lner's criterion.

\medskip

\no4 The common features of a number of examples which still demonstrate a certain stochastic homogeneity in spite of not being space homogeneous \emph{sensu stricto}\,---\,the Brownian motion on foliations, the random walks in random environment and on equivalence relations\,---\,prompted the second author to introduce the general setup of \emph{equivariant Markov chains on groupoids} \cite{Kaimanovich05a}. In the same way as in the particular case of groups (see \nm3 above), the Liouville property for such chains almost automatically implies the amenability of the underlying groupoid. A natural question then was to ask whether Furstenberg's conjecture holds for groupoids as well, namely, whether \emph{any amenable groupoid carries a Liouville equivariant Markov chain} \cite[Conjecture 4.6]{Kaimanovich05a}.

The purpose of this paper is to present a \emph{proof of the above conjecture for measured groupoids}. A preliminary draft was written by the first author in 2006 \cite{Buhler06p}, and this result was presented at a number of seminars at the time. The current version has been rewritten by the second author. In the meantime this conjecture was independently proved by Chu\,--\,Li \cite{Chu-Li18} in a greater generality of semigroupoids, both in the measure and in the topological categories. Although our approach and that of \cite{Chu-Li18} are based on the same general idea from \cite{Kaimanovich-Vershik83}, technically they are quite different, and our argument appears to be more straightforward.

The construction of a Liouville measure from \cite[Theorem 4.3]{Kaimanovich-Vershik83} was carried over to numerous other setups by Hayashi\,--\,Yamagami \cite[Theorem~2.5]{Hayashi-Yamagami00}, the second author \cite[Theorem 1]{Kaimanovich02a}, Bartholdi \cite[Theorem~8.20]{Bartholdi18}, Juschenko\,--\,Zheng \cite[Lemma~2]{Juschenko-Zheng18}, Schneider\,--\,Thom \cite[Theorem~4.8]{Schneider-Thom19p}. We expect our technique to provide a uniform treatment of all these situations and be applicable to other Markov chains (in particular, on hypergroups and hypergroupoids) as well.

\medskip

\no5 Let us briefly outline the structure of the paper. In \secref{sec:1} we remind the reader of the definition of amenable measured groupoids and of the associated background.

One can think about a groupoid $\Gb$ (the most succinctly defined as a small category with invertible morphisms) as a collection of arrows (morphisms) between its objects. There is a composition operation that obeys the same rules as the group multipication (groups being precisely the groupoids with only one object). An important difference, though, is that the composition in groupoids is partial in the sense that two arrows are composable only if the target of one matches the source of the other one. We denote by $\Gb^x$ the fibre of the target map over an object $x$, i.e., the set of all arrows with the target~$x$. These fibres are moved around according to the formula
$$
\gb \,\Gb^{\,\uun{\gb}} = \Gb^{\,\ov{\gb}} \qquad\forall\,\gb\in\Gb \;,
$$
where $\un{\gb}$ and $\ov{\gb}$ are the source and the target of the morphism $\gb$, respectively. A~Haar system is a collection $\lab = \{\la^x\}_{x\in X}$ of measures on the fibres $\Gb^x$ which is left invariant in the sense that
$$
\gb \, \la^{\uun{\gb}} = \la^{\ov{\gb}} \qquad\forall\,\gb\in\Gb \;,
$$
The remaining ingredient of the definition of a measured groupoid $(\Gb,\lab,\ka)$ is a measure~$\ka$ on the set of objects which is required to be quasi-invariant in a sense similar to the quasi-invariance used for group actions. By integrating the measures~$\la^x$ from the Haar system against $\ka$ one obtains then a measure on the whole groupoid denoted by $\lab \star \ka$.

The definition of the amenability of a measured groupoid $(\Gb,\lab,\ka)$ we are working with (\dfnref{dfn:reiter}) is the one of Renault \cite[Lemma 3.4]{Renault80} recast as in Anantharaman-Delaroche\,--\,Renault \cite[Definition 2.6]{Anantharaman-Renault01} and \cite[\mbox{Section~2.B}]{Kaimanovich05a}. It requires what we call the \emph{integral strong asymptotic invariance (ISAI)} condition: there exists a sequence of systems $\left\{\th_n^x\right\}_{x\in X}$ of absolutely continuous probability measures on the fibres $\Gb^x$ such that the \emph{discrepancy functions}
\begin{equation} \label{eq:di}
\gb \mapsto \left\| \gb\,\theta_{n}^{\uun{\gb}} - \theta_{n}^{\ov{\gb}} \right\|
\end{equation}
converge to 0 in the weak$^*$ topology $\si(L^\infty,L^1)$ of the space $L^\infty(\Gb,\lab \star \ka)$.

The first rather obvious observation (\remref{rem:mw}) consists in noticing that in our situation the weak$^*$ convergence is equivalent just to the convergence
$$
\int \left\| \gb\,\theta_{n}^{\uun{\gb}} - \theta_{n}^{\ov{\gb}} \right\| \, d\mf (\gb) \xrightarrow[n \to \infty]{} 0
$$
of the integrals with respect to a \emph{single} reference probability measure $\mf$ equivalent to $\lab\star\ka$, or, in the ``$\ep$-form'',
$$
\forall\,\ep>0 \quad \exists\,\{\th^x\}: \quad \int \left\| \gb\,\theta^{\uun{\gb}} - \theta^{\ov{\gb}} \right\| \, d\mf (\gb) < \ep \;.
$$

\medskip

\no6 The principal part of the paper is \secref{sec:2}. We begin it with recalling the definition of an equivariant Markov chain on a groupoid $\Gb$ from \cite{Kaimanovich05a} (\dfnref{dfn:chain}). One imposes on its transition probabilities the same equivaraince condition
$$
\pi^{\gb'\gb} = \gb'\,\pi^{\gb}
$$
as for random walks on groups, but in the groupoid case additionally one has to make sure that the translate $\gb'\,\pi^{\gb}$ is well-defined whenever the morphisms $\gb$ and $\gb'$ are composable, which means that the transition probability $\pi^\gb$ has to be concentrated on the fibre $\Gb^{\,\ov{\gb}}$. Thus, the sample paths of an equivariant Markov chain are confined to the fibres of the target map, so that the whole ``global chain'' can be considered as a collection of ``local'' chains on the fibres.

The set of objects of a groupoid is embedded into the set of morphisms by the unit inclusion map. Therefore (\prpref{prp:mf}), an equivariant Markov chain gives rise to a target fibred system of probability measures, and, conversely, all transition probabilities can be recovered from this system by the formula
$$
\pi^{\gb}=\gb\,\pi^{\uun{\gb}} \;.
$$
The product of two equivariant chains (more rigorously, of their transition operators) corresponds to the usual convolution of the associated target fibred systems of measures (or, in the absolutely continuous case, of their densities).

The discrepancy functions \eqref{eq:di} from the definition of an amenable groupoid can be then interpreted in terms of equivariant Markov chains as
$$
\left\| \gb\,\theta^{\uun{\gb}} - \theta^{\ov{\gb}} \right\|
= \left\| \pi^\gb - \pi^{\ov \gb} \right\|
= \left\| (\de_\gb - \de_{\ov \gb}) P \right\| = \De(\gb,P) \;,
$$
where $\{\th^x\}$ is a system of measures on the target map fibres,
$$
\pi^\gb=g\th^{\uun{\gb}}
$$
are the transition probabilities of the associated equivariant Markov chain, and $P$ is the transition operator of this chain. We say that
$$
\gb\mapsto\De(\gb,P)
$$
is the \emph{discrepancy function of an equivariant transition operator} $P$, or, of the corresponding equivariant Markov chain (\dfnref{dfn:dscrp}). As we have already mentioned, the sample paths of an equivariant Markov chain are confined to the fibres of the target map, and the value of the discrepancy function at a point $\gb$ is the total variation distance between the transitions probabilities issued from $\gb$ and from the distinguished point $\ov \gb$ of its target fibre~$\Gb^{\,\ov \gb}$. The \emph{mean discrepancy} with respect to a probability measure $m$ on $\Gb$ is the $m$-average
$$
\De(m,P) = \int \De(\gb,P)\,dm(\gb)
$$
of the discrepancy function.

We can now say that a sequence of equivariant transition operators $P_n$ on groupoid satisfies the \emph{integral strong asymptotic invariance (ISAI)} condition\,---\,it was earlier introduced for target fibred systems of measures\,---\,if their discrepancy functions converge to~0 in the weak$^*$ topology, or, equivalently, if
$$
\De(\mf,P_n) \to 0
$$
for a fixed reference measure $\mf$ (\dfnref{dfn:IDRop}). Renault's definition of an amenable measured groupoid amounts then to the existence of an ISAI sequence of equivariant Markov operators.

\medskip

\no7 In the group case an equivariant Markov operator $P=P_\th$ is determined just by a single probability measure $\th$ on the group, and its mean discrepancy is then
\begin{equation} \label{eq:md}
\De(m,P) = \De(m,\th) = \int \| g\th - \th \| \,dm(g) \;,
\end{equation}
so that the amenability of a locally compact group is equivalent to the existence of a sequence of absolutely continuous probability measures $\th_n$ such that
\begin{equation} \label{eq:1}
\De(\mf,\th_n) \to 0
\end{equation}
for a fixed ($\equiv$ any) reference probability measure $\mf$ equivalent to the Haar measure, or, in the $\ep$-form,
\begin{equation} \label{eq:2}
\forall\,\ep>0 \quad \exists\, \th: \quad \De(\mf,\th) < \ep \;.
\end{equation}

Although in the group context one usually employs the criteria of Day, F{\o}lner, or Reiter (see \nm1 above), conditions \eqref{eq:1} and \eqref{eq:2} can be quite useful as well. Condition \eqref{eq:2} is nothing but an extension of the usual F{\o}lner condition from sets to measures, where the reference measure~$\mf$ plays the role of a generating set (see \cite{Hulanicki66} and \cite{Kaimanovich92b} for passing from sets to measures and back in isoperimetric conditions). A similar integral characteristic obtained by replacing the total variation with the Kullback\,--\,Leibler divergence plays an important role in the entropy theory of random walks developed in \cite{Kaimanovich-Vershik83}.

Actually, the mean discrepancy \eqref{eq:md} in a hidden form appears in the second of the two F{\o}lner criteria (the ``sufficient condition'' of \cite[Main Theorem (2)]{Folner55}), which in our notation states that the amenability of a discrete group $G$ is equivalent to the existence of a constant $C<2$ such that for any finitely supported probability measure $\mf$ on $G$ there is a finite subset $A\subset G$ with
\begin{equation} \label{eq:fo2}
\De(\mf,\upchi_A) < C \;,
\end{equation}
where $\upchi_A$ denotes the uniform probability measure on $A$, cf. Juschenko\,--\,Nagnibeda \cite[Theorem 17]{Juschenko-Nagnibeda15} and Gournay \cite[Corollary 5.2]{Gournay15}. A topological version of condition~\eqref{eq:fo2} was recently obtained by Schneider\,--\,Thom \cite[Theorem~4.5(3)]{Schneider-Thom18}. It would be interesting to extend \eqref{eq:fo2} to measures and to carry it over to topological groups.

\medskip

\no8 The key ingredient of our approach is the following simple observation:

\begin{namedthm*}{\prpref{prp:QPn}}
If $(P_n)$ is an ISAI sequence of transition operators on a measured groupoid, then for any other equivariant transition operator with absolutely continuous transition probabilities $Q$ the sequence $(QP_n)$ is also ISAI.
\end{namedthm*}

By using it we establish our main technical tool:

\begin{namedthm*}{\thmref{thm:IDRpower}}
If $(P_n)$ is an ISAI sequence of equivariant transition operators with absolutely continuous transition probabilities on a measured groupoid, then there exists an infinite convex combination $P$ of the operators $P_n$ such that the sequence of powers $P^n$ is also ISAI.
\end{namedthm*}

Its proof is based on the same general idea as in \cite[Theorem~4.3]{Kaimanovich-Vershik83}. However, the use of \prpref{prp:QPn} instead of Reiter's condition allows us both to simplify and to generalize that argument. As an immediate corollary of \thmref{thm:IDRpower} we then obtain our principal result:

\begin{namedthm*}{\thmref{thm:powers}}
A measured groupoid $(\Gb,\lab,\ka)$ is amenable if and only if there exists an equivariant transition operator on $P$ with absolutely continuous transition probabilities such that the sequence of its powers satisfies the integral strong asymptotic invariance condition (ISAI).
\end{namedthm*}

It implies, in particular, that the weak$^*$ convergence to 0 of the discrepancy functions \eqref{eq:di} in the definition of amenability can be replaced with the convergence almost everywhere.

By using the general 0-2 laws for Markov chains \thmref{thm:IDRpower} is further interpreted in terms of the triviality of the tail and of the Poisson boundaries of the fibrewise Markov chains determined by equivariant transition operators (we recall that since the samples paths of an equivariant Markov chain are confined to the target map fibres, it only makes sense to talk about the boundary triviality of the fibrewise chains). Namely,

\begin{namedthm*}{\thmref{thm:main}}
The amenability of a measured groupoid is equivalent to the existence of an equivariant Markov chain with absolutely continuous transition probabilities whose tail boundary (alternatively, Poisson boundary) is fibrewise trivial.
\end{namedthm*}

\medskip

\no9 In \secref{sec:3} we specialize the general results of \secref{sec:2} to the particular case of measure class preserving group actions. The fact that the action of any (not necessarily amenable) locally compact group on its Poisson boundary is in a sense similar to actions of amenable groups was the motivation for Zimmer's definition of an amenable action \cite{Zimmer78}, and the Poisson boundary action was his primary example.

Zimmer's argument for proving the amenability of the Poisson boundary action critically depends on the time homogeneity. By using a different approach (cf. the discussion in \nm3 above) Connes\,--\,Woods \cite[Theorem 2.2 and Remark 2.3]{Connes-Woods89} established the amenability of the action of a locally compact group $G$ on the Poisson boundary for what they called \emph{matrix-valued random walks} (equivariant Markov chains on the product of~$G$ by a certain countable set subject to some additional assumptions on the transition probabilities). Actually, since the matrix-valued random walks are time inhomogeneous, in this situation one should rather talk about the tail boundary (the quotient of the path space by the synchronous asymptotic equivalence relation), see \secref{sec:02}. It was later proved that, conversely, any measure class preserving amenable action of a second countable locally compact group can be presented as the boundary action of an appropriately defined matrix-valued random walk: first by Elliott\,--\,Giordano \cite{Elliott-Giordano93} for countable groups and then by Adams\,--\,Elliott\,--\,Giordano \cite{Adams-Elliott-Giordano94} in the general case.

Given a group action $G:X\acts$, the objects of the associated groupoid $\Gb$ are just the points of the action space $X$. The morphisms of $\Gb$ are the pairs of points from the same action orbit labelled with a group element that moves one to the other. Therefore, the fibres $\Gb^x$ can be identified with the group $G$, and the systems of measures $\{\th^x\}_{x\in X}$ on the fibres $\Gb^x$ that appear in the definition of amenability are nothing but maps from the action space $X$ to the space of probability measures on $G$.

Renault's definition mentioned in \nm5 takes then the following form for a measure class preserving action of a locally compact group $G$ on a measure space $(X,\ka)$: the action is amenable if and only if there exists a sequence of measurable maps $x\mapsto\th_n^x$ from $X$ to the space $\Pca(G)$ of absolutely continuous probability measures on $G$ which is asymptotically equivariant in the sense that the functions
$$
(g,x) \mapsto \|g\th_n^x - \th_n^{gx} \|
$$
converge to 0 in the weak$^*$ topology of the space $L^\infty(G\times X, \la\otimes\ka)$; here $\la$ is a Haar measure on $G$.

Equivariant Markov chains on the action groupoid $\Gb$ can be interpreted as\,---\,in the probabilistic terminology\,---\,\emph{random walks in random environment} on the group~$G$. The presence of a group structure allows one to talk about the increments between consecutive group elements, and to describe Markov chains on $G$ in terms of the distibutions of these increments. Thus, an \emph{environment} on $G$ is a collection $\mub=\{\mu^g\}_{g\in G}$ of probability measures on $G$ which determines the Markov chain with the transitions
$$
g\xrsquigarrow{h\sim\mu^g} gh
$$
(the ordinary space homogeneous random walk determined by a step distribution~$\mu$ corresponds to the constant environment $\mu^g\equiv\mu$). A random walk in random environment consists in sampling first an environment $\mub$, and then performing the random walk in this environment.

The restriction of the equivariant Markov chain determined by a system of measures~$\{\th^x\}$ to a single fibre $\Gb^x$ can, therefore, be considered as the random walk on $G$ in the environment
$$
\mub=\{\mu^g\}_{g\in G} \;, \qquad \mu^g = \th^{g^{-1} x} \;,
$$
which becomes ``random'' in the presence of a probability measure on the action space. Thus, for actions
\thmref{thm:main} can be restated in the following way:

\pagebreak

\begin{namedthm*}{\thmref{thm:actions}}
The amenability of a measure class preserving action of a locally compact group $G$ is equivalent to the existence of a measurable map $x\mapsto\th^x$ from the action space to the space of absolutely continuous probability measures on $G$ such that the tail boundary (alternatively, the Poisson boundary) of the random walk on the group $G$ in the arising random environment is almost surely trivial.
\end{namedthm*}

\section{Amenable groupoids} \label{sec:1}

\subsection{Groupoids} \label{sec:gr}

A \textsf{groupoid} is a \emph{small category in which each morphism is an isomorphism}. In other words, a groupoid $\Gb$ is determined by a \textsf{set of objects} (\textsf{units})
$$
X=\Obj \Gb=\Gb^{\,(0)}
$$
and a \textsf{set of morphisms} (\textsf{elements}) denoted just by
$$
\Gb \cong \Mor \Gb = \Gb^{\,(1)}
$$
endowed with the \textsf{source} (\textsf{domain}) and \textsf{target} (\textsf{codomain}, or \textsf{range}) maps
$$
\so(\gb) = \un{\gb} \;, \qquad \ta(\gb) = \ov{\gb}
$$
from $\Gb$ to $X$. The set of \textsf{composable pairs} in $\Gb$ is
$$
\Gd = \left\{ (\gb,\gb\myp)\in \Gb\times\Gb: \un{\gb}=\ov{\gb\myp} \right\} \;,
$$
and the \textsf{composition} (\textsf{multiplication}) is a map
$$
\Gd\to \Gb \;, \qquad (\gb,\gb\myp)\mapsto \gb\,\gb\myp
$$
such that
\begin{equation} \label{eq:mult}
\un{\gb\, \gb\myp} = \un{\gb\myp} \;,\qquad \ov{\gb\, \gb\myp}=\ov{\gb} \;,
\end{equation}
see \figref{fig:groupoid}.

\begin{figure}[h]
\begin{center}
\psfrag{a}[][]{$\un{\gb\myp}$}
\psfrag{c}[][]{$\un{\gb}=\ov{\gb\myp}$}
\psfrag{e}[][]{$\ov{\gb}$}
\psfrag{u}[cl][cl]{$\gb\myp$}
\psfrag{v}[cl][cl]{$\gb$}
\psfrag{x}[ct][ct]{$\;\;\gb\, \gb\myp$}
\includegraphics[scale=.8]{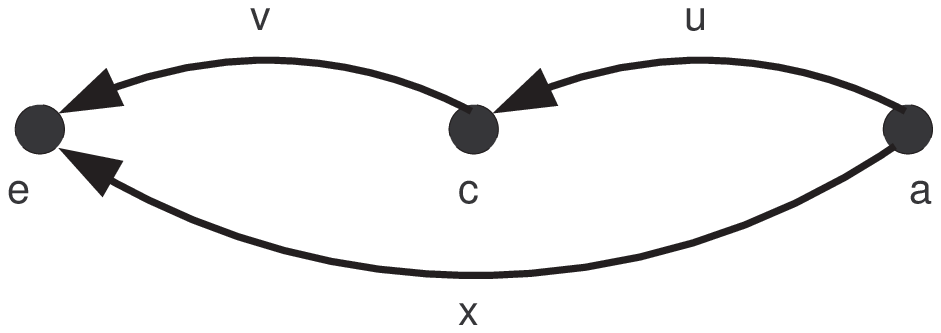}
\end{center}
\caption{Composition in groupoids.}
\label{fig:groupoid}
\end{figure}

\begin{rem}
The multiplication rule \eqref{eq:mult} matches the prefix notation used for the left actions of groups, in which one has $(gg')x=g(g'x)$, i.e., $g'$ is applied ``first'', cf. \secref{sec:3}. This is why the arrows in \figref{fig:groupoid} are directed to the left. This convention allows us to introduce the equivariant Markov chains on a groupoid (\dfnref{dfn:chain}) by imposing the same condition $\pi^{\gb\gb'} = \gb\pi^{\gb'}$ as in the usual definition of the (right) random walks on groups.
\end{rem}

The \textsf{fibres} of the source and the target maps are denoted
$$
\Gb_{\,x} = \so^{-1}(x) = \left\{\gb\in \Gb: \un{\gb}=x \right\} \;, \qquad
\Gb^{\,x} = \ta^{-1}(x) = \left\{\gb\in \Gb: \ov{\gb}=x \right\} \;,
$$
respectively, so that for any $\gb\in\Gb$ the domain of the left multiplication by $\gb$ is precisely the fibre $\Gb^{\,\uun{\gb}}$, and
\begin{equation} \label{eq:gG}
\gb \,\Gb^{\,\uun{\gb}} = \Gb^{\,\ov{\gb}} \qquad\forall\,\gb\in\Gb \;.
\end{equation}

There is an embedding (the \textsf{unit inclusion})
$$
\eb:X\to \Gb \;,
$$
which associates to any object $x\in X$ the \textsf{identity morphism} $\eb_x$ (below we shall routinely identify $x$ and $\eb_x$) such that
$$
\un{\eb_x} = \ov{\eb_x} = x \qquad \forall\,x\in X
$$
and
$$
\gb \, \eb_{\un{\gb}} = \eb_{\ov{\gb}} \, \gb = \gb \qquad\forall\,\gb\in\Gb \;.
$$
For any $\gb\in\Gb$ there is a unique \textsf{inverse morphism} $\gb^{-1}$ with the property that
$$
\un{\gb^{-1}} = \ov{\gb} \;, \qquad \ov{\gb^{-1}} = \un{\gb} \;,
$$
and
$$
\gb\,\gb^{-1} = \eb_{\ov{\gb}} \;, \qquad \gb^{-1}\,\gb = \eb_{\un{\gb}} \;.
$$
Finally, the composition (when well-defined) is associative.

\medskip

The standard \emph{examples of groupoids} are
\begin{enumerate}[---]
\item
\emph{groups}: they are the groupoids with only one object;
\item
\emph{equivalence relations} (the associated groupoids are called \textsf{principal}): the morphisms are just the pairs of equivalent objects;
\item
\emph{group actions} (which combine the features both of groups and of equivalence relations): the morphisms are the triples which consist of two action space points together with a group element moving one point to the other one, see \secref{sec:3} for more details.
\end{enumerate}

\subsection{Measured groupoids} \label{sec:am}

A \textsf{measurable structure} (i.e., a $\si$-algebra of \textsf{measurable sets}) on the set of morphisms of a groupoid $\Gb$ induces the pullback measurable structures both on the set of objects $X$ (by the unit inclusion map $\eb:X\hookrightarrow \Gb$) and on the set of composable pairs~$\Gd$ (by the inclusion $\Gd\hookrightarrow \Gb\times\Gb$). If all structure maps described in \secref{sec:gr} are measurable, then the groupoid~$\Gb$ is called \textsf{measurable}.

A \textsf{measurable system of measures}
$$
\lab = \{\la^x\}_{x\in X}
$$
on the fibres $\Gb^{\,x}$ of the target map of a measurable groupoid $\Gb$ (in short: a \textsf{measurable target fibred system of measures}) is the one for which the map
$$
x \mapsto \langle f, \la^x \rangle
$$
is measurable whenever $f$ is a non-negative measurable function on $\Gb$ (the fibrewise measures $\la^x$ are not required to be normalized or finite unless otherwise specified). A \textsf{Haar system} on a measurable groupoid $\Gb$ is a measurable target fibred system of measures $\lab = \{\la^x\}_{x\in X}$ which is \textsf{left invariant} in the sense that
$$
\gb \, \la^{\uun{\gb}} = \la^{\ov{\gb}} \qquad\forall\, \gb\in\Gb \;,
$$
and \textsf{proper}, i.e., there exists a non-negative measurable function $f$ on $\Gb$ such that
$$
\langle f, \la^x \rangle = 1 \qquad \forall\, x \in X \;.
$$

Given a measurable target fibred system $\lab=\{\la^x\}$ on a groupoid $\Gb$ and a measure~$\ka$ on its set of objects $X$, one can integrate the fibrewise measures $\la^x$ with respect to $\ka$ to obtain a measure on $\Gb$ denoted $\lab \star \ka$. The measure $\ka$ is called \textsf{quasi-invariant} with respect to a Haar system $\lab$ if the inversion map $g\mapsto g^{-1}$ preserves the $(\lab \star \ka)$-negligible sets. Finally, a \textsf{measured groupoid} is a triple $(\Gb,\lab,\ka)$, where $\Gb$ is a measurable groupoid, $\lab$~is a Haar system on $\Gb$, and $\ka$ is a quasi-invariant measure on the set of objects $X$.

\medskip

\conv{\em All measure spaces are assumed to be Lebesgue\,--\,Rokhlin measure spaces, and all measures are finite or $\si$-finite unless otherwise specified.}

\subsection{Amenability} \label{sec:amen}

Without spelling out numerous equivalent definitions of amenability of measured groupoids (for more details see Anantharaman-Delaroche\,--\,Renault \cite[Chapter 3]{Anantharaman-Renault00}), we shall only define it in terms of \textsf{asymptotically invariant target fibred systems of probability measures} (also called \textsf{approximate invariant means}), which is Renault \cite[Lemma 3.4]{Renault80} or Anantharaman-Delaroche\,--\,Renault \cite[Definition 2.6]{Anantharaman-Renault01} recast as in \cite[Section~2.B]{Kaimanovich05a}:

\begin{dfn}\label{dfn:reiter}
A measured groupoid $(\Gb,\lab,\ka)$ is called \textsf{amenable} if there exists a sequence of measurable target fibred systems $\thb_n=\left\{\th_n^x\right\}_{x\in X}$ of \textsf{absolutely continuous} probability measures (i.e., such that $\theta_{n}^x \prec \la^x$ for all $x \in X$) which satisfies the \textsf{integral strong \footnotemark\ asymptotic invariance (ISAI) condition}:
$$
\int \left\| \gb\,\theta_{n}^{\uun{\gb}} - \theta_{n}^{\ov{\gb}} \right\| \, dm (\gb) \xrightarrow[n \to \infty]{} 0
$$
for any probability measure $m \prec \lab \star \ka$,  where $\|\,\cdot\,\|$ denotes the total variation norm.
\end{dfn}

\footnotetext{\,The qualifier \emph{strong} in this definition refers to the \emph{strong topology} induced by the total variation distance on the space of measures.}

\begin{rem} \label{rem:mw}
The integral strong asymptotic invariance condition means that the sequence of \textsf{discrepancy functions}
\begin{equation} \label{eq:fIDR}
\gb \mapsto \left\| \gb \theta_{n}^{\uun{\gb}} - \theta_{n}^{\ov{\gb}} \right\| \;,\qquad \gb\in \Gb\;,
\end{equation}
converges to 0 in the \textsf{weak$^*$ topology} $\si(L^\infty,L^1)$ of the space $L^\infty(\Gb,\lab\star\ka)$. It is well-known and easy to verify (e.g., see Dunford\,--\,Schwartz \cite[Exercise~IV.13.27]{Dunford-Schwartz88}) that in our situation (convergence of non-negative functions with uniformly bounded $L^\infty$ norms to 0) the weak$^*$ convergence is equivalent just to the convergence of the integrals with respect to a \emph{single} probability measure from the same measure class. Thus, the integral strong asymptotic invariance condition on a sequence $(\thb_n)$ is equivalent to the convergence
$$
\int \left\| \gb\,\theta_{n}^{\uun{\gb}} - \theta_{n}^{\ov{\gb}} \right\| \, d\mf (\gb) \xrightarrow[n \to \infty]{} 0
$$
for a fixed reference probability measure $\mf$ equivalent to $\lab\star\ka$ (notation: $\mf\sim\lab\star\ka$).
\end{rem}

\begin{rem} \label{rem:pw}
Since the discrepancy functions \eqref{eq:fIDR} are uniformly bounded, the integral strong asymptotic invariance condition follows from the \textsf{pointwise strong asymptotic invariance condition}
$$
\left\| \gb\,\theta_{n}^{\uun{\gb}} - \theta_{n}^{\ov{\gb}} \right\| \xrightarrow[n \to \infty]{} 0 \qquad \text{for}\;(\lab\star\ka)\text{-a.e.} \;\gb\in\Gb \;,
$$
which is \emph{a priory} stronger. However, the integral and the pointwise conditions are equivalent for \emph{power sequences}, see the proof of \thmref{thm:powers} below.
\end{rem}

\section{Markov chains on groupoids and approximate invariance} \label{sec:2}

\subsection{Equivariant Markov chains}

We shall now interpret target fibred systems of probability measures on a groupoid in Markov terms.

\begin{dfn}[{\cite[Definition 3.1]{Kaimanovich05a}}] \label{dfn:chain}
A Markov chain on a measurable groupoid $\Gb$ determined by a measurable \footnotemark\, family of transition probability measures $\pib=\{\pi^\gb\}_{\gb\in\Gb}$ is called \textsf{equivariant} if
\begin{equation} \label{eq:tg}
\pi^\gb \pigl( {\Gb^{\,\ov{\gb}}} \pigr) = 1 \qquad\forall\,\gb\in \Gb \;,
\end{equation}
and
\begin{equation} \label{eq:g1g2}
\pi^{\gb\gb'} = \gb\,\pi^{\gb'} \qquad\forall\,(\gb,\gb\myp)\in\Gd \;,
\end{equation}
so that the map $\gb\mapsto\pi^\gb$ is equivariant with respect to the left action of $\Gb$.
\end{dfn}

\footnotetext{\,The measurability of the family $\pib$ is understood here in the same weak sense as in \secref{sec:am}, i.e., that the map $\gb\mapsto \langle f, \pi^\gb \rangle$ is measurable for any measurable function $f$ on $\Gb$.}

\begin{rem} \label{rem:fibre}
Condition \eqref{eq:tg} of the above definition is necessary and sufficient for the translate $\gb\,\pi^{\gb'}$ from condition \eqref{eq:g1g2} to be well-defined for any $(\gb,\gb\myp)\in\Gd$. The fact that all transition measures $\pi^\gb$ are concentrated on the corresponding fibres $\Gb^{\,\ov{\gb}}$ means that the value of the target map remains constant under the Markov transitions, i.e., one can consider the ``global'' chain on $\Gb$ as a collection of the ``local'' chains on the fibres $\Gb^{\,x}$ of the target map.
\end{rem}

Another equivalent way of describing a Markov chain consists in using the associated \textsf{transition operator} $P$ which acts as
\begin{equation} \label{eq:op}
Pf (\gb) = \langle f, \pi^\gb \rangle \;, \qquad \al P=\int \pi^\gb\,d\al(\gb)
\end{equation}
on appropriately defined spaces of functions and measures, respectively.

\begin{prp}[cf. {\cite[Proposition 3.4]{Kaimanovich05a}}] \label{prp:mf}
There is a canonical one-to-one correspondence between equivariant Markov chains on a measurable groupoid~$\Gb$ and measurable target fibred systems of probability measures on $\Gb$.
\end{prp}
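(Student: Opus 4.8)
The plan is to exhibit the correspondence explicitly in both directions and check that the two assignments are mutually inverse. In one direction, given an equivariant Markov chain $\pib=\{\pi^\gb\}$, I would restrict the family to the units, setting $\th^x=\pi^{\eb_x}$ for every object $x\in X$. Condition \eqref{eq:tg} forces $\pi^{\eb_x}$ to be concentrated on $\Gb^{\,\ov{\eb_x}}=\Gb^{\,x}$, so $\{\th^x\}$ is a target fibred system of probability measures; its measurability follows by composing the (weak) measurability of $\gb\mapsto\langle f,\pi^\gb\rangle$ with the measurable unit inclusion $\eb:X\to\Gb$. In the other direction, given a measurable target fibred system $\{\th^x\}$, I would set $\pi^\gb=\gb\,\th^{\un{\gb}}$. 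Since $\th^{\un{\gb}}$ lives on $\Gb^{\,\un{\gb}}$ and left translation by $\gb$ carries $\Gb^{\,\un{\gb}}$ onto $\Gb^{\,\ov{\gb}}$ by \eqref{eq:gG}, this $\pi^\gb$ is a probability measure satisfying \eqref{eq:tg}.

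Next I would verify equivariance \eqref{eq:g1g2}: for $(\gb,\gb')\in\Gd$ one has $\un{\gb\gb'}=\un{\gb'}$ by \eqref{eq:mult}, so $\pi^{\gb\gb'}=(\gb\gb')\,\th^{\un{\gb'}}=\gb\,(\gb'\,\th^{\un{\gb'}})=\gb\,\pi^{\gb'}$, using associativity of the composition. That the two constructions are mutually inverse is then a direct check against the unit laws. Starting from a system and returning gives $\pi^{\eb_x}=\eb_x\,\th^{\un{\eb_x}}=\th^x$, because left translation by the identity $\eb_x$ fixes $\Gb^{\,x}$ pointwise; starting from an equivariant chain and returning gives $\gb\,\pi^{\eb_{\un{\gb}}}=\pi^{\gb\,\eb_{\un{\gb}}}=\pi^\gb$ by \eqref{eq:g1g2} applied to the composable pair $(\gb,\eb_{\un{\gb}})$ together with $\gb\,\eb_{\un{\gb}}=\gb$. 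Here the equivariance of the chain is precisely what makes the recovery succeed.

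The one step requiring care is the measurability of the reconstructed family $\gb\mapsto\pi^\gb$, i.e.\ that $\gb\mapsto\langle f,\gb\,\th^{\un{\gb}}\rangle=\int_{\Gb^{\,\un{\gb}}}f(\gb h)\,d\th^{\un{\gb}}(h)$ is measurable for every non-negative measurable $f$. The coupling between the fibre index $\un{\gb}$ and the translation by $\gb$ inside the integrand is what makes this nontrivial. I would handle it by a functional monotone class argument over $\Gd$: let $\mathcal H$ be the class of bounded measurable $F$ on $\Gd$ for which $\gb\mapsto\int F(\gb,h)\,d\th^{\un{\gb}}(h)$ is measurable. For product functions $F(\gb,h)=\phi(\gb)\psi(h)$ the integral equals $\phi(\gb)\,\langle\psi,\th^{\un{\gb}}\rangle$, which is measurable because $\gb\mapsto\un{\gb}$ is measurable and $x\mapsto\langle\psi,\th^x\rangle$ is measurable by the defining measurability of the system. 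Since $\mathcal H$ is a vector space closed under bounded monotone limits and contains this generating family, it contains all bounded measurable functions; applying this to $F(\gb,h)=f(\gb h)$, which is measurable on $\Gd$ because the composition map is, yields the desired measurability and completes the verification that both assignments are well defined.
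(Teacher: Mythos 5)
Your proposal is correct and follows exactly the paper's approach: restriction to the units $\th^x=\pi^{\eb_x}$ in one direction and translation $\pi^\gb=\gb\,\th^{\uun{\gb}}$ in the other. The paper merely asserts well-definedness, equivariance, and measurability of the reconstructed family, whereas you supply the verifications (including the mutual-inverse check and the monotone class argument for measurability), so this is the same proof carried out in full detail rather than a different route.
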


\begin{proof}
The restriction of the system of the transition probabilities $\{\pi^\gb\}_{\gb\in \Gb}$ of an equivariant Markov chain to the space of objects $X$ (which is embedded into~$\Gb$ by the unit inclusion map $\eb$) produces a system of probability measures
\begin{equation} \label{eq:thx}
\th^x = \pi^{\eb_x}
\end{equation}
which is target fibred by condition \eqref{eq:tg}, and the restriction of a measurable system~$\{\pi^g\}$ from $\Gb$ to $X$ is a measurable system as well.

Conversely, if $\{\th^x\}_{x\in X}$ is a measurable target fibred system of probability measures, then put
\begin{equation} \label{eq:gx}
\pi^\gb = \gb \, \th^{\uun{\gb}} \qquad\forall\,\gb\in \Gb \;.
\end{equation}
Since $\th^{\uun{\gb}}$ is concentrated on $\Gb^{\,\uun{\gb}}$, the measures $\pi^\gb$ are well-defined, satisfy conditions \eqref{eq:tg} and \eqref{eq:g1g2}, and the system $\{\pi^\gb\}$ is measurable.
\end{proof}

\begin{rem}
The above correspondence clearly preserves the absolute continuity. Namely, the transition probabilities $\{\pi^\gb\}$ of an equivariant Markov chain on a groupoid $\Gb$ are \textsf{absolutely continuous} with respect to a Haar system $\lab=\{\la^x\}$ in the sense that
$$
\pi^\gb \prec \la^{\ov{\gb}} \qquad \forall\,\gb \in \Gb
$$
if and only if $\th^x\prec \la^x$ for all measures from the corresponding target fibred system~$\{\th^x\}$.
\end{rem}

\begin{rem}
The product of two equivariant Markov operators on a groupoid is easily seen to be equivariant as well, which, in view of the above correspondence, gives rise to an associative binary operation on the set of measurable target fibred systems of probability measures. This operation can be further extended to the set of all measurable target fibred systems of (not necessarily probability and not necessarily positive) measures under appropriate finiteness conditions which guarantee that the composition be well-defined. For groups (when there is only one object, so that any target fibred system consists just of a single measure) this is precisely the usual \textsf{convolution}, and it is natural to use this term for groupoids as well.

In the case of absolutely continuous systems of measures the convolution can then be considered as an operation with the corresponding densities, and this is how the convolution of functions on a groupoid is usually defined (e.g., see \mbox{Renault} \cite[Section~II.1]{Renault80} or Anantharaman-Delaroche\,--\,Renault \cite[Section 3.1.a]{Anantharaman-Renault00}). However, we emphasize that, in precisely the same way as for groups, the natural domain of the convolution operation on groupoids consists of (systems~of) \emph{measures} rather than of \emph{functions} (which only arise as densities with respect to certain reference measures, e.g., with respect to a Haar system).
\end{rem}

\subsection{Asymptotically invariant sequences of transition operators}

The discrepancy functions \eqref{eq:fIDR} that appear in the definition of amenability of a groupoid (\dfnref{dfn:reiter}) admit a natural interpretation in terms of the transition probabilities~$\{\pi^\gb\}$ of the equivariant Markov chain determined by a target fibred system~$\{\th^x\}$. Indeed,
by formulas \eqref{eq:thx} and \eqref{eq:gx}, respectively, for any $\gb\in\Gb$
$$
\th^{\ov \gb}=\pi^{\eb_{\ov \gb}}=\pi^{\ov \gb}
$$
(under the usual identification of objects $x\in X$ with the corresponding identity morphisms~$\eb_x$), and
$$
\gb \, \th^{\uun{\gb}}=\pi^\gb \;,
$$
or, in the operator notation \eqref{eq:op},
$$
\th^{\ov \gb}=\de_{\ov \gb} P \;, \qquad \gb \,\th^{\uun{\gb}}=\de_\gb P \;.
$$

As we have already mentioned in \remref{rem:fibre}, the sample paths of an equivariant Markov chain are confined to the fibres $\Gb^{\,x}$ of the target map; each of these fibres contains a distinguished point (the identity morphism $\eb_x$ at $x$), and the value of function \eqref{eq:fIDR} at a point $\gb\in \Gb$ is therefore nothing but the total variation distance between the transitions probabilities issued from the point $\gb$ and from the distinguished point $\ov \gb$ of its target fibre~$\Gb^{\,\ov \gb}$.

\begin{dfn} \label{dfn:dscrp}
The \textsf{discrepancy} of an equivariant Markov chain (or, of the corresponding transition operator $P$) on a groupoid $\Gb$ at a point $\gb\in \Gb$ is the total variation
$$
\De(\gb,P) = \left\| \pi^\gb - \pi^{\ov \gb} \right\| = \left\| (\de_\gb - \de_{\ov \gb}) P \right\| \;.
$$
The \textsf{mean discrepancy} with respect to a probability measure $m$ on $\Gb$ is the \mbox{$m$-average}
$$
\De(m,P) = \int \De(\gb,P)\,dm(\gb)
$$
of the \textsf{discrepancy function} $\De(\cdot,P)$.
\end{dfn}

\dfnref{dfn:IDRop} and \prpref{prp:am} below are the reformulations of the definitions of the strongly asymptotically invariant sequences and of amenability from \secref{sec:amen} in terms of the discrepancy functions of equivariant Markov chains obtained by replacing, with the help of \prpref{prp:mf}, target fibred systems of measures with the associated equivariant transition operators.

\begin{dfn} \label{dfn:IDRop}
A sequence of equivariant transition operators on a measured groupoid $(\Gb,\lab,\ka)$ satisfies the \textsf{integral strong asymptotic invariance (ISAI) condition} if the sequence of their discrepancy functions converges to 0 in the weak$^*$ topology of the space $L^\infty(\Gb,\lab\star\ka)$.
\end{dfn}

\begin{rem} [cf. \remref{rem:mw}] \label{rem:conv}
The weak$^*$ convergence
$$
\De(\cdot,P_n)\to 0
$$
in the above definition is equivalent to the convergence
$$
\De(\mf,P_n)\to 0
$$
just for a single reference probability measure $\mf\sim\la\star\ka$.
\end{rem}

\begin{rem} [cf. \remref{rem:pw}] \label{rem:pw1}
The integral strong asymptotic invariance condition follows from the \textsf{pointwise strong asymptotic invariance condition}, which in the Markov setup amounts to the almost everywhere convergence of the discrepancy functions
$$
\De(g,P_n) \to 0 \qquad \text{for}\;(\lab\star\ka)\text{-a.e.} \;\gb\in \Gb \;.
$$
\end{rem}

\begin{prp} \label{prp:am}
A measured groupoid is amenable if and only if it admits an ISAI sequence of equivariant Markov chains with absolutely continuous transition probabilities.
\end{prp}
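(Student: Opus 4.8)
The statement asserts the equivalence of two conditions: amenability of the measured groupoid $(\Gb,\lab,\ka)$ in the sense of \dfnref{dfn:reiter}, and the existence of an ISAI sequence of equivariant Markov chains with absolutely continuous transition probabilities in the sense of \dfnref{dfn:IDRop}. Since both sides are quantifier-for-quantifier translations of each other through \prpref{prp:mf}, the proof should be a direct verification that the two formulations coincide term by term, not an argument requiring any genuinely new idea.

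\textbf{First} I would invoke \prpref{prp:mf} to set up the correspondence: every measurable target fibred system $\thb=\{\th^x\}$ of probability measures corresponds to a unique equivariant Markov chain with transition probabilities $\pi^\gb = \gb\,\th^{\uun{\gb}}$, and by the Remark immediately following \prpref{prp:mf} this correspondence preserves absolute continuity, so $\th^x\prec\la^x$ for all $x$ if and only if $\pi^\gb\prec\la^{\ov\gb}$ for all $\gb$. \textbf{Second}, I would recall the identity computed at the start of this subsection, namely that for any $\gb\in\Gb$ one has
$$
\left\| \gb\,\th^{\uun{\gb}} - \th^{\ov\gb} \right\|
= \left\| \pi^\gb - \pi^{\ov\gb} \right\|
= \De(\gb,P) \;,
$$
which identifies the discrepancy function $\eqref{eq:fIDR}$ appearing in \dfnref{dfn:reiter} with the discrepancy function $\De(\cdot,P)$ of \dfnref{dfn:dscrp}. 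With this pointwise identity in hand, the integral strong asymptotic invariance condition of \dfnref{dfn:reiter} for a sequence $(\thb_n)$ is literally the same statement as the ISAI condition of \dfnref{dfn:IDRop} for the associated sequence of operators $(P_n)$.

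\textbf{The final step} is to assemble these two observations. For the forward implication, an amenable groupoid supplies, by \dfnref{dfn:reiter}, a sequence $(\thb_n)$ of absolutely continuous target fibred systems whose discrepancy functions tend to $0$ weak${}^*$; passing through \prpref{prp:mf} yields a sequence of equivariant Markov chains with absolutely continuous transition probabilities whose discrepancy functions, being \emph{equal} to the former, also tend to $0$ weak${}^*$, which is exactly the ISAI condition of \dfnref{dfn:IDRop}. The converse runs the identical argument in reverse, using the one-to-one nature of the correspondence to recover the systems $\thb_n$ from the chains. I do not expect any real obstacle here: the only point demanding a modicum of care is confirming that the correspondence of \prpref{prp:mf} is compatible with measurability of the \emph{sequence} and with the weak${}^*$ formulation (equivalently, via \remref{rem:mw} and \remref{rem:conv}, with convergence of integrals against a single reference measure $\mf\sim\lab\star\ka$), but this is immediate since the identification $\eqref{eq:fIDR}=\De(\cdot,P)$ holds $(\lab\star\ka)$-almost everywhere and hence respects any fixed measure class.
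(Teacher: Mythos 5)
Your proposal is correct and matches the paper exactly: the paper offers no separate proof of \prpref{prp:am}, stating instead that it (together with \dfnref{dfn:IDRop}) is simply a reformulation of the amenability definition obtained by replacing target fibred systems with the associated equivariant transition operators via \prpref{prp:mf} and the identity $\bigl\| \gb\,\theta^{\uun{\gb}} - \theta^{\ov{\gb}} \bigr\| = \De(\gb,P)$. Your step-by-step unwinding (correspondence, preservation of absolute continuity, pointwise equality of discrepancy functions, hence equality of the weak$^*$ convergence conditions) is precisely this argument, spelled out.
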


Our main technical result (to be proved in \secref{sec:proof} below) is

\begin{thm} \label{thm:IDRpower}
If $(P_n)$ is an ISAI sequence of equivariant transition operators with absolutely continuous transition probabilities on a measured groupoid, then there exists an infinite convex combination $P$ of the operators $P_n$ such that the sequence of powers $P^n$ is also ISAI.
\end{thm}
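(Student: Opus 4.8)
The plan is to produce $P$ as an infinite convex combination $P=\sum_{k\ge1}c_k P_{n_k}$ with weights $c_k=2^{-k}$ and a rapidly growing sequence $n_1<n_2<\cdots$ chosen inductively, and to control the powers $P^m$ by a first-passage decomposition. The starting point is the \emph{monotonicity} of the mean discrepancy under right multiplication: if $R$ is any Markov operator then $\De(\mf,SR)\le\De(\mf,S)$ for every operator $S$, because $R$ is a contraction of the total variation norm on mass-zero measures. In particular $\De(\mf,P^{m+1})\le\De(\mf,P^m)$, so the sequence $\bigl(\De(\mf,P^m)\bigr)_m$ is non-increasing and it suffices to prove that its limit, i.e. $\limsup_m\De(\mf,P^m)$, vanishes. (By \remref{rem:conv}, convergence of $\De(\mf,\cdot)$ against the single reference $\mf$ is the same as the weak$^*$ ISAI condition.)

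Fix a threshold $K$ and split $P=P_{\le K}+P_{>K}$, with $P_{\le K}=\sum_{k\le K}c_kP_{n_k}$ and $P_{>K}=\sum_{k>K}c_kP_{n_k}$ substochastic of masses $1-2^{-K}$ and $2^{-K}$. Classifying $P^m$ according to the first factor at which a summand of $P_{>K}$ is used yields the operator identity
\[
P^m=P_{\le K}^{\,m}+\sum_{t=1}^{m}P_{\le K}^{\,t-1}\,P_{>K}\,P^{\,m-t}.
\]
Subadditivity of $\De(\mf,\cdot)$, monotonicity applied to the trailing stochastic factor $P^{m-t}$, and the estimate $\De(\mf,P_{\le K}^{\,m})\le 2(1-2^{-K})^{m}\to0$ then reduce matters to the tail bound $\limsup_m\De(\mf,P^m)\le\sum_{t\ge1}\De\bigl(\mf,P_{\le K}^{\,t-1}P_{>K}\bigr)$.

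To estimate each summand I would use the quantitative mechanism behind \prpref{prp:QPn}: for a left operator $S$ with absolutely continuous transition probabilities and any right operator $R$,
\[
\De(\mf,SR)\le\int\De(\gb',R)\,d\bar m_S(\gb'),\qquad \bar m_S=\int\bigl|(\de_{\gb}-\de_{\ov\gb})S\bigr|\,d\mf(\gb),
\]
the point being that $\sigma_\gb=(\de_\gb-\de_{\ov\gb})S$ has total mass zero and is carried by the fibre $\Gb^{\,\ov\gb}$, so the constant term $\pi^{\ov\gb}_R$ cancels and $\|\sigma_\gb R\|\le\int\|\pi^{\gb'}_R-\pi^{\ov{\gb'}}_R\|\,d|\sigma_\gb|=\int\De(\gb',R)\,d|\sigma_\gb|$; integrating in $\gb$ and applying Tonelli gives the inequality, with $\bar m_S\prec\lab\star\ka$ as soon as $S$ performs at least one absolutely continuous step. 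Applying this with $S=P_{\le K}^{\,t-1}$ for $t\ge2$ and $R=P_{>K}$, treating the harmless $t=1$ term $\De(\mf,P_{>K})$ directly, and summing the geometrically decaying masses $\|(\de_\gb-\de_{\ov\gb})P_{\le K}^{\,t-1}\|\le 2(1-2^{-K})^{t-1}$, bundles all the left factors into a single finite measure $\bar M_K=\sum_{t\ge2}\int|(\de_\gb-\de_{\ov\gb})P_{\le K}^{\,t-1}|\,d\mf\prec\lab\star\ka$, yielding
\[
\limsup_m\De(\mf,P^m)\le\sum_{k>K}c_k\Bigl(\De(\mf,P_{n_k})+\langle\De(\cdot,P_{n_k}),\bar M_K\rangle\Bigr).
\]

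The construction is then forced. Having chosen $P_{n_1},\dots,P_{n_{k-1}}$, the finitely many measures $\mf$ and $\bar M_K$ $(K<k)$ are already determined and absolutely continuous with respect to $\lab\star\ka$; since $(P_n)$ is ISAI, the weak$^*$ convergence $\De(\cdot,P_n)\to0$ lets me pick $n_k>n_{k-1}$ so large that $\De(\mf,P_{n_k})<2^{-k}$ and $\langle\De(\cdot,P_{n_k}),\bar M_K\rangle<2^{-k}$ for all $K<k$. For the resulting $P$ and any fixed $K$ the displayed tail is then bounded by a constant times $\sum_{k>K}2^{-k}$, which tends to $0$ as $K\to\infty$; hence $\limsup_m\De(\mf,P^m)=0$, and by monotonicity $\De(\mf,P^m)\to0$, i.e. $(P^m)$ is ISAI. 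The main obstacle is exactly the uniformity over the first-passage time $t$: \prpref{prp:QPn} only delivers $\langle\De(\cdot,P_n),\bar m\rangle\to0$ for one fixed test measure, while the decomposition spawns the whole family of left factors $P_{\le K}^{\,t-1}$. The device that defeats it is the geometric decay of the substochastic masses, which collapses that family into the \emph{single} measure $\bar M_K\prec\lab\star\ka$, so one application of the ISAI hypothesis covers all $t$ simultaneously; the rapid growth of the $n_k$ then disentangles the residual circular dependence between $\bar M_K$ and the later operators.
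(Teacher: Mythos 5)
Your proof is correct, but it is organized genuinely differently from the paper's, so a comparison is worthwhile. The paper fixes weights $t_i$ and exponents $k_i$ with $(t_1+\dots+t_{i-1})^{k_i}=\ep_i\searrow0$ in advance, inductively picks $n_i$ so that $\De(\mf,QP_{n_i})\le\ep_i$ for \emph{every} product $Q$ of at most $k_i$ of the previously chosen operators (finitely many applications of the purely qualitative \prpref{prp:QPn}), and then, at the times $k=k_i$, expands $P^k=\sum_I t_I R_I$ into words and splits according to the largest letter: words with all letters $<i$ carry total weight $\ep_i$, while each word containing a letter $\ge i$ satisfies $\De(\mf,R_I)\le\ep_i$ by the defining property of the $n$'s (applied to the prefix preceding the first occurrence of the maximal letter) combined with the right-monotonicity \lemref{lem:ineq}(i). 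You instead use dyadic weights, the first-passage identity $P^m=P_{\le K}^{\,m}+\sum_{t=1}^m P_{\le K}^{\,t-1}P_{>K}P^{\,m-t}$, and the integrated inequality $\De(\mf,SR)\le\langle\De(\cdot,R),\bar m_S\rangle$, which is essentially the paper's \lemref{lem:ineq}(ii): indeed $\bar m_S\le\mf S+\ov{\mf}S$, and you could use this majorant instead, which also sidesteps any measurability concerns about the family $\gb\mapsto|(\de_\gb-\de_{\ov\gb})S|$. The new twist is that you exploit this inequality \emph{quantitatively}: the geometric decay of the substochastic masses sums the infinitely many left factors $P_{\le K}^{\,t-1}$, $t\ge2$, into a single finite measure $\bar M_K\prec\lab\star\ka$, so your inductive step tests $P_n$ only against the short list $\mf,\bar M_1,\dots,\bar M_{k-1}$, with no pre-chosen exponents and no word bookkeeping; the paper's step instead tests against a combinatorially large (but finite) family of word operators and needs no quantitative lemma at all. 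Both arguments rest on the same two pillars\,---\,right multiplication cannot increase the discrepancy, and left multiplication by an absolutely continuous operator preserves ISAI\,---\,and both descend from the Kaimanovich\,--\,Vershik construction; your renewal-type decomposition is arguably closer to the original probabilistic intuition, while the paper's word decomposition keeps all auxiliary objects qualitative. One small point you rightly flagged: the $t=1$ term must be treated separately, since for $S$ the identity operator the measure $\bar m_S$ is atomic and not absolutely continuous with respect to $\lab\star\ka$.
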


\thmref{thm:IDRpower} then implies the following characterization of amenability of a measured groupoid:

\pagebreak

\begin{thm} \label{thm:powers}
The following conditions on a measured groupoid $(\Gb,\lab,\ka)$ are equivalent:
\begin{enumerate}[{\rm (i)}]
\item
$(\Gb,\lab,\ka)$ is amenable;
\item
there exists a sequence $(\thb_n)$ of measurable target fibred systems of absolutely continuous probability measures which satisfies the pointwise strong asymptotic invariance condition
$$
\left\| \gb \, \theta_{n}^{\uun{\gb}} - \theta_{n}^{\ov{\gb}} \right\| \xrightarrow[n \to \infty]{} 0 \qquad
\text{for}\;(\lab\star\ka)\text{-a.e.} \;\gb\in \Gb \;,
$$
or, in terms of the corresponding transition operators $P_n$,
$$
\left\| (\de_\gb - \de_{\ov{\gb}}) P_n \right\| \xrightarrow[n \to \infty]{} 0 \qquad
\text{for}\;(\lab\star\ka)\text{-a.e.} \;\gb\in \Gb \;;
$$
\item
the systems $\thb_n$ from condition \textup{(ii)} can be chosen to be the convolution powers of a single such system, i.e., the corresponding transition operators are the powers $P_n=P^n$ of a single operator $P$.
\end{enumerate}
\end{thm}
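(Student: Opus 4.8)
The plan is to prove the cycle of implications (iii) $\Rightarrow$ (ii) $\Rightarrow$ (i) $\Rightarrow$ (iii). The implication (iii) $\Rightarrow$ (ii) is immediate, since (iii) is merely the special case of (ii) in which the systems $\thb_n$ are the convolution powers of a single system. The implication (ii) $\Rightarrow$ (i) follows at once from \remref{rem:pw1} together with \prpref{prp:am}: the discrepancy functions are uniformly bounded, so the pointwise strong asymptotic invariance in (ii) entails the integral one, and an ISAI sequence of equivariant Markov chains with absolutely continuous transition probabilities is exactly what amenability requires. Thus the whole substance of the theorem is the implication (i) $\Rightarrow$ (iii).

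For (i) $\Rightarrow$ (iii) I would start from amenability and invoke \prpref{prp:am} to obtain an ISAI sequence $(P_n)$ of equivariant transition operators with absolutely continuous transition probabilities. Feeding this sequence into \thmref{thm:IDRpower} produces a single equivariant operator $P$ — an infinite convex combination of the $P_n$, hence again with absolutely continuous transition probabilities — whose power sequence $(P^n)$ satisfies the integral condition $\De(\mf,P^n)\to 0$ for a reference measure $\mf\sim\lab\star\ka$. It remains only to upgrade this \emph{integral} convergence to the \emph{pointwise} convergence demanded by (iii), and this is precisely where the power structure is indispensable.

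The key observation is that the discrepancy along a power sequence is monotone. For any signed measure $\nu$ the Markov operator $P$ is a total variation contraction, $\|\nu P\|\le\|\nu\|$ (decompose $\nu=\nu^+-\nu^-$ and use that $P$ sends each positive part to a positive measure of the same mass); applying this to $\nu=(\de_\gb-\de_{\ov\gb})P^n$ gives
$$
\De(\gb,P^{n+1}) = \left\| (\de_\gb-\de_{\ov{\gb}})P^{n+1} \right\| \le \left\| (\de_\gb-\de_{\ov{\gb}})P^{n} \right\| = \De(\gb,P^{n})
$$
for every $\gb\in\Gb$. Hence for each fixed $\gb$ the sequence $\De(\gb,P^n)$ is non-increasing and bounded, so it converges to a limit $\De_\infty(\gb)\ge 0$. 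Since the discrepancy functions are uniformly bounded by $2$, dominated (or monotone) convergence yields $\int \De_\infty\,d\mf = \lim_n \De(\mf,P^n)=0$, whence $\De_\infty=0$ for $\mf$-a.e.\ $\gb$, i.e.\ for $(\lab\star\ka)$-a.e.\ $\gb$. Thus $\De(\gb,P^n)\to 0$ almost everywhere, which is exactly the pointwise condition in (iii) in operator form; translating back through \prpref{prp:mf} to the target fibred systems $\thb_n$ with $P^n$ as associated operators closes the circle.

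I do not expect any serious obstacle in this argument: once \thmref{thm:IDRpower} is in hand, the only nontrivial ingredient is the monotonicity of $n\mapsto\De(\gb,P^n)$, which is a one-line consequence of the total variation contractivity of Markov operators. The genuine difficulty of the whole circle of ideas sits entirely inside \thmref{thm:IDRpower} (and, behind it, \prpref{prp:QPn}); the present theorem is in effect its packaging, with the monotonicity remark supplying the bridge between the integral and the pointwise forms of asymptotic invariance that the earlier \remref{rem:pw} and \remref{rem:pw1} deferred to this proof.
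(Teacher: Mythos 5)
Your proposal is correct and takes essentially the same route as the paper: (iii)$\implies$(ii) trivial, (ii)$\implies$(i) via \remref{rem:pw}, \remref{rem:pw1} and \prpref{prp:am}, and (i)$\implies$(iii) by combining \prpref{prp:am} with \thmref{thm:IDRpower} and then upgrading integral to pointwise convergence using the monotonicity $\De(\gb,P^{n+1})\le\De(\gb,P^n)$, which is exactly the paper's inequality \eqref{eq:PQ} (your total variation contractivity argument is how that inequality is proved). The only difference is that you spell out the monotone-plus-dominated-convergence step that the paper's proof leaves implicit.
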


\begin{proof}
Since the implication (iii)$\implies$(ii) is trivial, whereas the implication \mbox{(ii)$\implies$(i)} follows from the fact that the pointwise strong asymptotic invariance condition is stronger than the integral one (see \remref{rem:pw} and \remref{rem:pw1}), we only have to verify the implication (i)$\implies$(iii), which is an immediate consequence of \prpref{prp:am} and \thmref{thm:IDRpower}.

Indeed, the sequence of the discrepancy functions $\De(\cdot,P^n)$ is uniformly bounded and monotone in $n$, see inequality \eqref{eq:PQ} below, and therefore the integral and the pointwise strong asymptotic invariance conditions are equivalent for any power sequence~$(P^n)$.
\end{proof}

\subsection{Inequalities for the discrepancy function}

Here we establish certain auxiliary properties of discrepancy functions and ISAI sequences used in the proof of \thmref{thm:IDRpower}.

It is straightforward that for any pair of equivariant transition operators $P,Q$ the discrepancy function of the product $PQ$ satisfies the inequality
\begin{equation} \label{eq:PQ}
\begin{aligned}
\De(\gb,PQ) &= \left\| (\de_\gb - \de_{\ov \gb}) PQ \right\| \\
&\le \left\| (\de_\gb - \de_{\ov \gb}) P \right\| = \De(\gb,P)
\end{aligned}
\end{equation}
for any $\gb\in \Gb$. In what concerns the product in the opposite order,
\begin{equation} \label{eq:QP}
\begin{aligned}
\De(\gb,QP) &= \left\| (\de_\gb - \de_{\ov \gb}) QP \right\| \\
&\le \left\| \de_\gb QP - \de_{\ov \gb} P \right\| + \left\| \de_{\ov \gb} QP - \de_{\ov \gb} P \right\| \\
&\le \De(\de_\gb Q,P) + \De(\de_{\ov \gb}Q,P) \;,
\end{aligned}
\end{equation}
where we have used the fact that if a probability measure $m$ is concentrated on a fibre~$\Gb^{\,x}$, then
$$
\begin{aligned}
\left\| \left( m-\de_x \right) P \right\|
&= \left\| \int ( \de_\gb - \de_x) P \,dm(\gb) \right\| \\
&\le \int \| ( \de_\gb - \de_x ) P \| \,dm(\gb)
= \De(m,P) \;.
\end{aligned}
$$

By integrating inequalities \eqref{eq:PQ} and \eqref{eq:QP} we then obtain

\pagebreak

\begin{lem} \label{lem:ineq}
For any probability measure $m$ on $\Gb$ and any pair of equivariant transition operators $P,Q$
\begin{enumerate}[{\rm (i)}]
\item $\De(m,PQ)\le\De(m,P)\;,$
\item $\De(m,QP)\le\De(mQ,P)+\De(\ov m Q,P)\;,$
\end{enumerate}
where $\ov m$ denotes the image of the measure $m$ under the target map $\ta:\gb\mapsto\ov \gb$ composed (as usual) with the unit inclusion $\eb:X\to\Gb$.
\end{lem}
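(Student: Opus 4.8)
The plan is to derive both inequalities by integrating, against the measure $m$, the pointwise estimates \eqref{eq:PQ} and \eqref{eq:QP} already established above; the entire content of the lemma is carried by those two pointwise bounds, so the only work that remains is measure-theoretic bookkeeping, namely identifying the resulting integrals with the mean discrepancies $\De(mQ,P)$ and $\De(\ov m Q,P)$.

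For (i) this is immediate. Integrating the pointwise bound $\De(\gb,PQ)\le\De(\gb,P)$ from \eqref{eq:PQ} against $dm(\gb)$ and invoking the definition $\De(m,\cdot)=\int\De(\gb,\cdot)\,dm(\gb)$ of the mean discrepancy yields $\De(m,PQ)\le\De(m,P)$ at once, with no further argument needed.

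For (ii), integrating the pointwise bound \eqref{eq:QP} against $dm(\gb)$ gives
$$
\De(m,QP)\le\int\De(\de_\gb Q,P)\,dm(\gb)+\int\De(\de_{\ov\gb}Q,P)\,dm(\gb) \;,
$$
and it remains to recognize the two integrals on the right. The key observation is that the functional $\mu\mapsto\De(\mu,P)=\int\De(\eta,P)\,d\mu(\eta)$ is linear in the measure $\mu$, while the operator action $\mu\mapsto\mu Q$ is itself the barycenter $\mu Q=\int\de_\eta Q\,d\mu(\eta)$ from \eqref{eq:op}. Taking $\mu=m$ gives $mQ=\int\de_\gb Q\,dm(\gb)$; and since $\ov m=\int\de_{\ov\gb}\,dm(\gb)$ is by definition the pushforward of $m$ under the target map into the units, one likewise has $\ov m Q=\int\de_{\ov\gb}Q\,dm(\gb)$. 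Interchanging the order of integration then identifies
$$
\int\De(\de_\gb Q,P)\,dm(\gb)=\De(mQ,P) \;,\qquad \int\De(\de_{\ov\gb}Q,P)\,dm(\gb)=\De(\ov m Q,P) \;,
$$
which is precisely (ii).

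I do not expect any serious obstacle: everything substantive is already in \eqref{eq:PQ} and \eqref{eq:QP}. The one point deserving a moment's care is the Fubini/Tonelli interchange in the last step, together with the measurability of $\gb\mapsto\De(\de_\gb Q,P)$; both are routine in the Lebesgue\,--\,Rokhlin setting, and since $\De(\cdot,P)\ge 0$ the interchange is licensed by Tonelli without any integrability hypothesis. One should also note in passing that $mQ$ and $\ov m Q$ are again probability measures, being images of probability measures under the Markov operator $Q$, so that the mean discrepancy $\De(\cdot,P)$ is genuinely well-defined on them.
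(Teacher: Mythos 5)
Your proposal is correct and takes essentially the same route as the paper: the paper's own proof consists precisely of establishing the pointwise bounds \eqref{eq:PQ} and \eqref{eq:QP} and then stating that the lemma follows ``by integrating'' them against $m$. Your explicit identification of $\int\De(\de_\gb Q,P)\,dm(\gb)$ with $\De(mQ,P)$ and of $\int\De(\de_{\ov\gb}Q,P)\,dm(\gb)$ with $\De(\ov m Q,P)$, via the barycenter formula for $mQ$ and Tonelli, is exactly the bookkeeping the paper leaves implicit.
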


If $Q$ is an equivariant transition operator with absolutely continuous transition probabilities on a measured groupoid $(\Gb,\lab,\ka)$, and $m$ is a measure absolutely continuous with respect to $\lab\star\ka$, then the measures $mQ$ and $\ov m Q$ also have this property, and therefore \lemref{lem:ineq}(ii) implies

\begin{prp} \label{prp:QPn}
If $(P_n)$ is an ISAI sequence of transition operators on a measured groupoid, then for any other equivariant transition operator with absolutely continuous transition probabilities $Q$ the sequence $(QP_n)$ is also ISAI.
\end{prp}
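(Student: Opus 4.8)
The plan is to deduce the statement directly from the mean-discrepancy inequality \lemref{lem:ineq}(ii). By \remref{rem:conv} it suffices to test the mean discrepancy of $QP_n$ against a single reference probability measure $\mf\sim\lab\star\ka$, so I would fix such an $\mf$ and show $\De(\mf,QP_n)\to0$. Applying \lemref{lem:ineq}(ii) with $m=\mf$ and $P=P_n$ bounds at once
$$
\De(\mf,QP_n)\le\De(\mf Q,P_n)+\De(\ov{\mf}\,Q,P_n) \;,
$$
which transfers the question about the composites $QP_n$ into two questions about the operators $P_n$ alone, now evaluated at the measures $\mf Q$ and $\ov{\mf}\,Q$.

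The next step is to confirm that $\mf Q$ and $\ov{\mf}\,Q$ are admissible test measures for the ISAI hypothesis on $(P_n)$. Both are probability measures, and both are absolutely continuous with respect to $\lab\star\ka$: writing $\mf Q=\int\pi^\gb\,d\mf(\gb)$ with transition probabilities $\pi^\gb\prec\la^{\ov\gb}$, one sees that $\mf Q$ cannot charge a $(\lab\star\ka)$-null set, and the identical computation yields $\ov{\mf}\,Q\prec\lab\star\ka$. Since the ISAI condition on $(P_n)$ is by definition the weak$^*$ convergence $\De(\cdot,P_n)\to0$ in $L^\infty(\Gb,\lab\star\ka)$, testing this convergence against the $L^1$-densities of $\mf Q$ and $\ov{\mf}\,Q$ forces both $\De(\mf Q,P_n)\to0$ and $\De(\ov{\mf}\,Q,P_n)\to0$. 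The two terms on the right-hand side then vanish in the limit, giving $\De(\mf,QP_n)\to0$, so that $(QP_n)$ is ISAI.

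The only point that requires genuine care---and the sole place where the hypothesis on $Q$ is used---is the absolute continuity of $\ov{\mf}\,Q$. The measure $\ov{\mf}$ is concentrated on the unit space $X\hookrightarrow\Gb$, which may itself be $(\lab\star\ka)$-negligible, so $\ov{\mf}$ on its own need not belong to the measure class of $\lab\star\ka$; it is precisely the smoothing effect of one application of $Q$, through the absolute continuity of its transition probabilities, that returns $\ov{\mf}\,Q$ to that class. Once this is secured, the remainder is a routine two-line application of \lemref{lem:ineq}(ii), and the passage between the single-reference and the all-measures formulations of the ISAI condition is the weak$^*$ equivalence already recorded in \remref{rem:mw} and \remref{rem:conv}.
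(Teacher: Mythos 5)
Your proposal is correct and follows essentially the same route as the paper: the paper's proof likewise consists of applying \lemref{lem:ineq}(ii) and observing that, because $Q$ has absolutely continuous transition probabilities, the measures $\mf Q$ and $\ov{\mf}\,Q$ remain absolutely continuous with respect to $\lab\star\ka$, so the ISAI hypothesis on $(P_n)$ kills both terms. Your extra remark that $\ov{\mf}$ alone may live on a $(\lab\star\ka)$-negligible set, and that one application of $Q$ is what returns it to the right measure class, correctly pinpoints where the hypothesis on $Q$ enters.
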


\subsection{ISAI power sequences (proof of \thmref{thm:IDRpower})} \label{sec:proof}

Let us first fix a sequence of coefficients $t_i > 0$ with
$$
\sum_{i=1}^\infty t_i=1 \;,
$$
and choose an increasing integer sequence $k_i$ with
\begin{equation} \label{eq:ki}
\left(t_1 + \dots + t_{i-1} \right)^{k_i} = \ep_i \searrow 0 \;.
\end{equation}
By using \prpref{prp:QPn} we can inductively define a sequence of operators
$$
R_i=P_{n_i}
$$
by starting with $n_1=1$, and then for $i>1$ putting $n_i$ to be the minimal $n>n_{i-1}$ such that
\begin{equation} \label{eq:QPn}
\De(\mf,QP_n) \le \ep_i
\end{equation}
for any operator $Q$ which is a product of not more than $k_i$ of the operators $R_1,\dots,R_{i-1}$ (taken in any order and counted with their multiplicities), where $\mf$ is a fixed probability measure on $\Gb$ equivalent to $\lab\star\ka$. We claim that the operator
$$
P = \sum_{i=1}^\infty t_i R_i = \sum_{i=1}^\infty t_i P_{n_i}
$$
does the job. By \remref{rem:conv} we have to verify just that
$$
\De(\mf,P^k)\xrightarrow[k \to \infty]{} 0 \;.
$$

Indeed, for any integer $k$
\begin{equation} \label{eq:Pk}
P^k = \sum_I t_I R_I \;,
\end{equation}
where the sum is taken over all multi-indices $I=(i_1,i_2,\dots, i_k)$ with the entries $i_j\ge 1$, and
$$
t_I = t_{i_1} t_{i_2} \dots t_{i_k} \;, \qquad R_I = R_{i_1} R_{i_2} \dots R_{i_k} \;.
$$
Now, for $k=k_i$ we can split the sum \eqref{eq:Pk} as
$$
P^k = \sum_{I:|I|<i} t_I R_I + \sum_{I:|I|\ge i} t_I R_I \;,
$$
where
$$
|I| = \max_j i_j
$$
denotes the maximum of a multi-index $I=(i_1,i_2,\dots, i_k)$. By \eqref{eq:ki}
$$
\sum_{I:|I|<i} t_I = \ep_i \;,
$$
whereas by \eqref{eq:QPn} and \lemref{lem:ineq}(i)
$$
\De(\mf, R_I) \le \ep_i \qquad \forall\,I: |I|\ge i \;,
$$
whence
$$
\begin{aligned}
\De(\mf,P^k)
&\le \sum_{I:|I|<i} t_I \De(\mf,R_I) + \sum_{I:|I|\ge i} t_I \De(\mf,R_I) \\
&\le 2 \sum_{I:|I|<i} t_I + \ep_i \sum_{I:|I|\ge i} t_I
\le 3\ep_i \;.
\end{aligned}
$$
Since the sequence $\De(\mf,P^k)$ is non-increasing in $k$ by \lemref{lem:ineq}(i), the claim follows.

\subsection{0--2 laws for Markov chains} \label{sec:02}

In order to interpret \thmref{thm:powers} in entirely qualitative probabilistic terms, let us first recall the corresponding background definitions and results in the general situation. Given a transition operator~$P$ and an initial measure~$m$ on a state space $\Om$, we denote by $\Pb_m$ the associated measure on the path space~$\Om^{\ZZ_+}$. Note that $m$ (and, therefore, $\Pb_m$ as well) need not be normalized or finite.

There are two complete $\si$-algebras on the path space $\bigl(\Om^{\ZZ_+},\Pb_m\bigr)$ which describe the behaviour of the chain at infinity: the \textsf{tail $\si$-algebra} $\Af^\infty$ and its subalgebra
$\Ef\subset\Af^\infty$ (the \textsf{exit $\si$-algebra}) which consists of the time shift invariant events. These $\si$-algebras can also be described as the measurable envelopes of the respective \textsf{synchronous and asynchronous tail equivalence relations} on the path space:
$$
(\om_0,\om_1,\dots) \approx (\om'_0,\om'_1,\dots) \iff \exists\, n\ge 0: \om_{n+i}=\om'_{n+i} \;\;\forall\,i\ge 0 \;,
$$
and
$$
\quad\;\; (\om_0,\om_1,\dots) \sim (\om'_0,\om'_1,\dots) \iff \exists\, n,n'\ge 0: \om_{n+i}=\om'_{n'+i} \;\;\forall\,i\ge 0 \;.
$$
The associated quotient spaces of the path space are called, respectively, the \textsf{tail boundary} and the \textsf{Poisson boundary} of the Markov chain. Since the exit $\si$-algebra is a subalgebra of the tail one, the Poisson boundary is a quotient of the tail boundary.

The Poisson boundary is responsible, via the Poisson formula, for an integral representation of bounded \textsf{harmonic functions} on the state space (i.e., such functions~$f$ that $Pf=f$), whereas the tail boundary represents the bounded \textsf{space-time harmonic functions} (i.e., the sequences $(f_n)$ of functions on the state space with $f_n=P f_{n+1}$). In particular, the absence of non-constant bounded harmonic functions (the \textsf{Liouville property}) is equivalent to the triviality of Poisson boundary, i.e., to the triviality $\Pb_m$-(mod 0) of the exit $\si$-algebra.

Necessary and sufficient conditions for the triviality of the tail and the exit $\si$-algebras are provided by the respective \emph{0-2 laws} \cite{Derriennic76}, \cite{Kaimanovich92}. Below we shall only need the ``0~part'' of these laws in the situation when the measure $m$ is \textsf{quasi-substationary}, i.e., when the measure class of the step 1 distribution~$mP$ is dominated by that of $m$; in particular, this is the case when $m$-almost all transition probabilities are absolutely continuous with respect to~$m$.

\pagebreak

\begin{prp}[Derriennic {\cite[Theorem 6]{Derriennic76}}] \label{prp:02}
If an initial distribution~$m$ of the Markov chain with a transition operator $P$ is quasi-substationary, then
\begin{enumerate}[{\rm (i)}]
\item
the tail $\si$-algebra $\Af^\infty$ is trivial $\Pb_m$\textup{-(mod 0)} if and only if
$$
\left\| (\al-\be)P^n \right\| \to 0
$$
for any two probability measures $\al,\be\prec m$;
\item
the exit $\si$-algebra $\Ef$ is trivial $\Pb_m$\textup{-(mod 0)} if and only if
$$
\left\| (\al-\be)Q_n \right\| \to 0
$$
for any two probability measures $\al,\be\prec m$, where
$$
Q_n = \frac1n \left( P + P^2 + \dots + P^n \right)
$$
are the Ces{\`a}ro averages of the powers of the operator $P$.
\end{enumerate}
\end{prp}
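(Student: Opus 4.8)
\section*{Proof proposal}

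The plan is to translate both equivalences into a single statement about the path space $\Om^{\ZZ_+}$: I would identify the limit of the total variations in question with the distance between the restrictions of the path measures $\Pb_\al,\Pb_\be$ to the relevant tail $\si$-algebra, and then invoke a general triviality criterion. The two ``if'' directions are soft (a bounded harmonic object pairs with $\al P^n$ in a way that is constant in $n$), so the content is in the ``only if'' directions, and this is where quasi-substationarity is indispensable.

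First I would record the elementary consequence of the Markov property on which everything rests. Write $\mathcal G_n=\si(\om_n,\om_{n+1},\dots)$ for the decreasing sequence of ``future'' $\si$-algebras, so that $\bigcap_n\mathcal G_n=\Af^\infty$. Conditionally on $\om_n$, the law of the whole future $(\om_n,\om_{n+1},\dots)$ is governed by the \emph{same} transition kernel under $\Pb_\al$ and under $\Pb_\be$; hence the Radon--Nikodym derivative $d\Pb_\al/d\Pb_m$ restricted to $\mathcal G_n$ depends on $\om_n$ only and equals $d(\al P^n)/d(mP^n)$ evaluated at $\om_n$. A one-line computation then gives
$$
\left\| \Pb_\al|_{\mathcal G_n}-\Pb_\be|_{\mathcal G_n} \right\| = \left\| (\al-\be)P^n \right\| \;.
$$
It is exactly here that quasi-substationarity is used: it guarantees $\al P^n,\be P^n\prec mP^n\prec m$, so that all these densities genuinely live in $L^1(\Pb_m)$.

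Since $\mathcal G_n\downarrow\Af^\infty$, L\'evy's downward theorem gives $L^1(\Pb_m)$-convergence of the densities, whence the quantity above decreases to $\bigl\|\Pb_\al|_{\Af^\infty}-\Pb_\be|_{\Af^\infty}\bigr\|$ and
$$
\left\| (\al-\be)P^n \right\| \xrightarrow[n\to\infty]{} \left\| \Pb_\al|_{\Af^\infty}-\Pb_\be|_{\Af^\infty} \right\| \;.
$$
Part (i) now follows from the measure-theoretic fact that a sub-$\si$-algebra $\mathcal N$ is trivial $\Pb_m$-(mod 0) if and only if $\Pb_\al|_{\mathcal N}=\Pb_\be|_{\mathcal N}$ for all probability measures $\al,\be\prec m$. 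The forward implication is immediate; for the converse I would apply the equality of restrictions to the conditioned initial measures $\al=m(\cdot\mid\{\om_0\in E\})$ and combine it with the fact that the Markov property makes $\Pb_m(A\mid\Fc_n)$ $m$-a.e.\ constant for a tail event $A\in\Af^\infty$, so that the martingale convergence $\Pb_m(A\mid\Fc_n)\to\mathbf 1_A$ forces $\Pb_m(A)\in\{0,1\}$.

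For part (ii) I would run the identical scheme with the \emph{asynchronous} tail relation replacing the synchronous one, so that the shift-invariant subalgebra $\Ef\subset\Af^\infty$ takes over the role of $\Af^\infty$ and the Ces\`aro averages $\al Q_n$ take over the role of the marginals $\al P^n$; the averaging in $Q_n=\frac1n(P+\dots+P^n)$ is precisely what absorbs the arbitrary time shifts built into the asynchronous equivalence. The clean way to implement this is to note that quasi-substationarity makes the shift $T$ on $(\Om^{\ZZ_+},\Pb_m)$ nonsingular (indeed $T_*\Pb_m=\Pb_{mP}\prec\Pb_m$), so that a mean ergodic theorem for the positive $L^1(\Pb_m)$-contraction induced by $T$ applies and the Ces\`aro-averaged densities converge in $L^1(\Pb_m)$ to the conditional density relative to the invariant $\si$-algebra $\Ef$, yielding
$$
\left\| (\al-\be)Q_n \right\| \xrightarrow[n\to\infty]{} \left\| \Pb_\al|_{\Ef}-\Pb_\be|_{\Ef} \right\| \;,
$$
after which the same triviality criterion concludes. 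I expect this ergodic-averaging step to be the main obstacle: in the tail case the monotone reverse martingale on $\mathcal G_n$ supplies the limit for free, whereas here one must genuinely invoke the nonsingular ergodic theorem and verify that its limit is both $P$-invariant and of mean zero before it can be identified with the restriction to $\Ef$. As a cross-check for the directions that avoid this machinery, observe that for any bounded space-time harmonic object $(f_n)$ the pairing $\langle f_n,\al P^n\rangle=\langle f_0,\al\rangle$ is constant in $n$, so $|\langle f_0,\al-\be\rangle|\le\|f_n\|_\infty\,\|(\al-\be)P^n\|\to 0$ already forces every bounded harmonic (respectively space-time harmonic) function to be $m$-a.e.\ constant.
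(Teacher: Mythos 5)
The paper itself contains no proof of this proposition: it is imported verbatim from Derriennic \cite{Derriennic76}, so your attempt can only be judged on its own merits. Part (i) of your plan is the standard reverse-martingale argument and its first two steps are correct: the identity $\left\|\Pb_\al|_{\mathcal G_n}-\Pb_\be|_{\mathcal G_n}\right\|=\left\|(\al-\be)P^n\right\|$ holds, and L\'evy's downward theorem identifies the limit with $\left\|\Pb_\al|_{\Af^\infty}-\Pb_\be|_{\Af^\infty}\right\|$. But you have located quasi-substationarity in the wrong place, and this breaks the one direction that actually needs it. The densities exist with no hypothesis at all ($\al\prec m$ already gives $\Pb_\al\prec\Pb_m$, and $\al P^n\prec mP^n$ always), so the ``only if'' direction of (i) is in fact unconditional. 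What needs quasi-substationarity is the converse, and there your sketch fails: your ``measure-theoretic fact'' is not a fact about arbitrary sub-$\si$-algebras (its nontrivial half is special to the tail algebra), and the assertion that ``the Markov property makes $\Pb_m(A\mid\Fc_n)$ $m$-a.e.\ constant'' is false --- the Markov property only makes it a function $h_n(\om_n)$ of $\om_n$. To force $h_n$ to be constant you must test the hypothesis against initial distributions dominated by $mP^n$, i.e.\ condition at time $n$ rather than at time $0$ as you propose, and it is exactly quasi-substationarity $mP^n\prec m$ that makes such measures admissible competitors. This is not a cosmetic point: for the chain on $\ZZ$ which from $0$ jumps to $\pm 1$ with probability $\tfrac12$ and then moves deterministically away from the origin, with $m=\de_0$, the only admissible measures are $\al=\be=\de_0$, so your criterion holds vacuously, yet the tail $\si$-algebra is non-trivial $\Pb_m$-(mod 0); any proof that does not use quasi-substationarity in this direction proves something false.

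Part (ii) contains the more serious gap. Rewriting $\|(\al-\be)Q_n\|$ as the $L^1(\Pb_m)$-norm of the Ces\`aro averages $\tfrac1n\sum_{k\le n}\hat T^k(Z_\al-Z_\be)$ of the transfer operator $\hat T$ of the shift is fine, but the ``mean ergodic theorem for the positive $L^1(\Pb_m)$-contraction induced by $T$'' that you then invoke does not exist: for nonsingular transformations one only has a.e.\ and ratio theorems (Chacon--Ornstein, Hurewicz), and $L^1$-norm convergence of Ces\`aro averages genuinely fails. Indeed, applied to $Z\equiv 1$ your claimed convergence to the $\Ef$-conditional density would give $\|mQ_n-m\|\to 0$, which already fails for a transient random walk on a countable group with $m$ equivalent to the counting measure (there $\|mQ_n-m\|\to 2$, while $\Ef$-conditioning leaves $1$ fixed). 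So the statement can at best hold for differences $Z_\al-Z_\be$, and establishing that cancellation --- in particular that triviality of $\Ef$ forces $\|\tfrac1n\sum_{k\le n}\hat T^k(Z_\al-Z_\be)\|_1\to 0$ --- is not a citable ergodic theorem: it is precisely the content of Derriennic's theorem, proved by the zero-two law machinery of Ornstein--Sucheston type; alternatively one can reduce the exit case to the tail case for the auxiliary operator $\tfrac12(P+P^2)$, but that identification of $\Ef$ with the tail algebra of the averaged operator is itself a theorem (cf.\ \remref{rem:tail} and \cite{Kaimanovich92}), not a formality. You flag this step yourself as ``the main obstacle,'' but flagging it is not closing it: as written, part (ii) assumes what is to be proved.
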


\begin{rem}
If the measure $m$ is \textsf{stationary} (i.e., $mP=m$), then the associated measure~$\Pb_m$ on the path space is invariant with respect to the time shift $T$, and conditions (i) and (ii) above are equivalent to mixing and ergodicity of $T$, respectively; this is how these conditions are sometimes referred to (e.g., see Rosenblatt \cite{Rosenblatt81}). The simplest example in which (ii) is satisfied, whereas (i) is not (ergodicity without mixing), is provided by the deterministic random walk on the order~2 cyclic group $\ZZ_2$, or, in a less degenerate form, by the simple random walk on the infinite cyclic group $\ZZ$ (in both cases one takes for $m$ the counting measure on the state space).

Notice that even if the tail $\si$-algebra is non-trivial $\Pb_m$-(mod~0), it may still be trivial with respect to other ``smaller'' initial distributions (for instance, if a random walk on a group is Liouville, then the tail $\si$-algebra is trivial with respect to any one-point initial distribution), see \cite{Kaimanovich-Vershik83}, \cite{Kaimanovich92} for more details.
\end{rem}

\begin{rem} \label{rem:tail}
The sequence of Ces\`aro averages in condition (ii) can be replaced with the sequence of the averages of the powers of $P$ along any approximate invariant mean on $\ZZ_+$ \cite[Theorem 2.3]{Kaimanovich92}, in particular, with the sequence of the powers $R^n$ of the average
$$
R = \sum_k p_k P^k
$$
for any probability distribution $(p_k)$ on $\ZZ_+$ with the greatest common divisor 1, for instance, one can take
$$
R=\tfrac12 (P+P^2) \;.
$$
Thus, under the conditions of \prpref{prp:02} the exit $\si$-algebra of the operator $P$, and both the tail and the exit $\si$-algebras of the operator~$R$ are all either trivial or non-trivial $\Pb_m$-(mod~0) simultaneously (actually, these three $\si$-algebras can be identified in a natural sense, see \cite{Derriennic76}, \cite{Kaimanovich92}).
\end{rem}

\subsection{Asymptotic invariance and the fibrewise Liouville property}

We can now return to the equivariant Markov chains on groupoids. Since the sample paths of equivariant chains are confined to the target fibres $\Gb^{\,x}, x\in X$ (see \remref{rem:fibre}), any equivariant transition operator~$P$ can be considered as a collection of the \textsf{fibrewise transition operators}~$P^x$.

\begin{dfn}[{\cite[Definition 4.1]{Kaimanovich05a}}]
The tail (resp., Poisson) boundary of an equivariant Markov chain on a measured groupoid $(\Gb,\lab,\ka)$ is \textsf{fibrewise trivial} if the tail (resp., Poisson) boundary of $\ka$-almost every fibrewise chain is trivial with respect to the corresponding initial measure~$\la^x, x\in X$. If the Poisson boundary is fibrewise trivial, i.e., if almost every fibrewise chain is Liouville, we also say that the chain is \textsf{fibrewise Liouville}.
\end{dfn}

\prpref{prp:02}(i) immediately implies

\begin{prp} \label{prp:tdr}
The tail boundary of an equivariant Markov chain with absolutely continuous transition probabilities on a measured groupoid is fibrewise trivial if and only if the sequence of the powers of the associated transition operator satisfies the pointwise ($\equiv$~integral) \footnotemark\ strong asymptotic invariance condition.
\end{prp}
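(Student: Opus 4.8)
The plan is to reduce the assertion to a fibre-by-fibre application of Derriennic's 0--2 law \prpref{prp:02}(i) and then to reassemble the fibrewise conclusions by integrating over the object space. Since an equivariant operator $P$ is, by \remref{rem:fibre}, a collection of the fibrewise operators $P^x$, and the tail boundary is declared fibrewise trivial precisely when $\ka$-almost every fibrewise chain has trivial tail relative to $\la^x$, the whole statement should split cleanly along the target map.

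First I would disintegrate the reference measure as $\lab\star\ka=\int\la^x\,d\ka(x)$, so that a property holds for $(\lab\star\ka)$-a.e.\ $\gb\in\Gb$ if and only if, for $\ka$-a.e.\ $x\in X$, it holds for $\la^x$-a.e.\ $\gb$ in the fibre $\Gb^{\,x}$. Under this disintegration both the pointwise strong asymptotic invariance condition and the fibrewise triviality of the tail boundary decompose into conditions indexed by $x$, so it suffices to prove, for each fixed $x$, that the tail $\si$-algebra $\Af^\infty$ of the fibrewise chain $P^x$ with initial measure $\la^x$ is trivial $\Pb_{\la^x}$-(mod~0) precisely when $\De(\gb,P^n)\to0$ for $\la^x$-a.e.\ $\gb\in\Gb^{\,x}$. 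Because the transition probabilities $\pi^\gb=\de_\gb P$ of an equivariant chain are absolutely continuous with respect to $\la^{\ov{\gb}}=\la^x$ for every $\gb\in\Gb^{\,x}$, the measure $\la^x$ is quasi-substationary, and \prpref{prp:02}(i) applies: triviality of $\Af^\infty$ is equivalent to $\|(\al-\be)(P^x)^n\|\to0$ for all probability measures $\al,\be\prec\la^x$.

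The one genuinely delicate point is that Derriennic's criterion is phrased in terms of absolutely continuous measures $\al,\be\prec\la^x$, whereas the discrepancy $\De(\gb,P^n)=\|(\de_\gb-\de_{\ov{\gb}})(P^x)^n\|$ involves the point masses $\de_\gb$ and $\de_{\ov{\gb}}$; absolute continuity of the transition probabilities is exactly what bridges this gap through a single application of $P$. For the direction from triviality to pointwise invariance I would specialize the criterion to $\al=\de_\gb P=\pi^\gb$ and $\be=\de_{\ov{\gb}}P=\th^{\ov{\gb}}$, both absolutely continuous with respect to $\la^x$, obtaining $\De(\gb,P^{n+1})=\|(\pi^\gb-\th^{\ov{\gb}})(P^x)^n\|\to0$ and hence $\De(\gb,P^n)\to0$ for every $\gb\in\Gb^{\,x}$. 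For the converse I would integrate the identity $(\al-\de_x)(P^x)^n=\int(\de_\gb-\de_x)(P^x)^n\,d\al(\gb)$ against an arbitrary probability measure $\al\prec\la^x$; since the discrepancy functions are bounded by $2$, dominated convergence yields $\|(\al-\de_x)(P^x)^n\|\le\int\De(\gb,P^n)\,d\al(\gb)\to0$, and a triangle inequality through $\de_x$ gives $\|(\al-\be)(P^x)^n\|\to0$ for all $\al,\be\prec\la^x$, which is precisely Derriennic's condition.

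Finally, the parenthetical ``pointwise ($\equiv$ integral)'' is justified exactly as in the proof of \thmref{thm:powers}: by inequality \eqref{eq:PQ} the discrepancy functions $\De(\gb,P^n)$ are monotone non-increasing in $n$ and uniformly bounded, so their $\la^x$-a.e.\ convergence to $0$ is equivalent to the convergence of their integrals. The main obstacle here is not conceptual but one of bookkeeping: verifying that the two almost-everywhere conditions decompose compatibly along the target-map disintegration, and handling the passage between point masses and absolutely continuous measures cleanly so that Derriennic's law can be invoked on each fibre.
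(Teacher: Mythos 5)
Your proof is correct and takes essentially the same route as the paper: the paper merely states that \prpref{prp:02}(i) ``immediately implies'' the proposition, and your argument\,---\,disintegrating $\lab\star\ka$ along the target map, applying Derriennic's criterion fibrewise to the chain $P^x$ with initial measure $\la^x$, and bridging the point masses $\de_\gb,\de_{\ov{\gb}}$ with absolutely continuous measures via one application of $P$\,---\,is exactly the verification the paper leaves implicit. Your treatment of the parenthetical pointwise $\equiv$ integral equivalence (monotonicity from \eqref{eq:PQ} plus uniform boundedness) likewise coincides with the paper's footnote and the proof of \thmref{thm:powers}.
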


\footnotetext{\,See the proof of \thmref{thm:powers} for the equivalence of the pointwise and the integral asymptotic invariance conditions in the case of power sequences.}

Therefore, \thmref{thm:powers} leads to

\begin{thm} \label{thm:main}
The following conditions on a measured groupoid $(\Gb,\lab,\ka)$ are equivalent:
\begin{enumerate}[{\rm (i)}]
\item
the groupoid is amenable;
\item
the groupoid admits an equivariant Markov chain with absolutely continuous transition probabilities such that its tail boundary is fibrewise trivial;
\item
the groupoid admits a fibrewise Liouville equivariant Markov chain with absolutely continuous transition probabilities.
\end{enumerate}
\end{thm}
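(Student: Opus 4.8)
The plan is to assemble \thmref{thm:main} entirely from machinery already in place, the three ingredients being \thmref{thm:powers}, the fibrewise translation \prpref{prp:tdr}, and the $0$--$2$ law bookkeeping recorded in \prpref{prp:02} and \remref{rem:tail}. I would prove the cycle of implications (i)$\iff$(ii), (ii)$\implies$(iii), (iii)$\implies$(ii), which together yield (i)$\iff$(ii)$\iff$(iii). Throughout, the standing quasi-substationarity needed to invoke the $0$--$2$ laws fibrewise is automatic: for an equivariant chain with absolutely continuous transition probabilities one has $\pi^\gb\prec\la^{\ov\gb}$, so on the fibre $\Gb^{\,x}$ the transition probabilities are dominated by the initial measure $\la^x$, and hence \prpref{prp:02} applies for $\ka$-a.e.\ $x$.

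For the equivalence (i)$\iff$(ii) I would simply splice together two results. By \thmref{thm:powers} the amenability of $(\Gb,\lab,\ka)$ is equivalent to the existence of an equivariant transition operator $P$ with absolutely continuous transition probabilities whose power sequence $(P^n)$ satisfies the pointwise strong asymptotic invariance condition. By \prpref{prp:tdr} (which rests on \prpref{prp:02}(i)) this condition on $(P^n)$ is precisely the fibrewise triviality of the tail boundary of the associated chain. Composing the two equivalences gives (i)$\iff$(ii) at once. The implication (ii)$\implies$(iii) I expect to be immediate: the exit $\si$-algebra is a subalgebra of the tail $\si$-algebra, so the Poisson boundary is a quotient of the tail boundary, and fibrewise triviality of the tail boundary forces fibrewise triviality of the Poisson boundary, i.e., the chain is fibrewise Liouville.

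The one direction requiring a genuine (if short) idea is (iii)$\implies$(ii), and here I would use \remref{rem:tail}; this is the main obstacle, since one must upgrade triviality of the exit $\si$-algebra to triviality of the full tail $\si$-algebra of a suitably modified operator. Given a fibrewise Liouville chain with operator $P$---so that the exit $\si$-algebra is trivial $\la^x$-(mod $0$) for $\ka$-a.e.\ $x$---I would pass to the averaged operator
$$
R = \tfrac12\bigl( P + P^2 \bigr) \;.
$$
This $R$ is again equivariant with absolutely continuous transition probabilities, being a convex combination of products of operators of that type. By \remref{rem:tail}, under the standing quasi-substationarity the triviality of the exit $\si$-algebra of $P$ is equivalent to the triviality of the tail $\si$-algebra of $R$; applying this fibrewise for $\ka$-a.e.\ $x$ shows that the chain determined by $R$ has fibrewise trivial tail boundary, which is exactly condition (ii). This closes the cycle and proves the theorem.
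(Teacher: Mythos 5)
Your proof is correct, and all the ingredients you invoke are available in the paper; but your cycle is organized differently from the paper's own proof. The paper proves (i)$\implies$(ii) (one direction only, via \thmref{thm:powers} and \prpref{prp:tdr}), then (ii)$\implies$(iii) trivially, and closes the cycle with a \emph{direct} implication (iii)$\implies$(i): by \prpref{prp:02}(ii), fibrewise Liouville gives $\left\| (\de_\gb - \de_{\ov \gb}) PQ_n \right\| \to 0$ almost everywhere for the Ces\`aro averages $Q_n$, so the sequence $(PQ_n)$ satisfies the pointwise strong asymptotic invariance condition and amenability follows from the definition-level \prpref{prp:am}\,---\,no single operator with trivial tail boundary is ever produced in that step. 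You instead prove the full equivalence (i)$\iff$(ii) (using both directions of \thmref{thm:powers} composed with \prpref{prp:tdr}) and handle the Liouville condition by proving (iii)$\implies$(ii) via \remref{rem:tail}: passing to $R=\tfrac12(P+P^2)$, whose transition probabilities are again equivariant and absolutely continuous, and whose fibrewise tail $\si$-algebras are trivial exactly when the fibrewise exit $\si$-algebras of $P$ are. This is precisely the alternative that the paper itself records in the remark \emph{after} its proof of \thmref{thm:main}, so your route is sanctioned by the authors; its advantage is that it makes (ii)$\iff$(iii) a direct equivalence between boundary conditions (and exhibits a concrete operator witnessing (ii)), while the paper's route keeps the classical ``Liouville $\implies$ amenable'' implication self-contained, resting only on the 0--2 law and the definition of amenability rather than on the converse machinery of \thmref{thm:IDRpower}. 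Your verification of the standing quasi-substationarity hypothesis (transition probabilities on the fibre $\Gb^{\,x}$ are dominated by the initial measure $\la^x$) matches the corresponding step in the paper and is needed equally by both arguments.
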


\begin{proof}
(i)$\implies$(ii)
This is a combination of \thmref{thm:powers} and \prpref{prp:tdr}.

(ii)$\implies$(iii)
This is obvious, because the fibrewise Poisson boundaries are quotients of the respective tail boundaries.

(iii)$\implies$(i)  \cite[Theorem 4.2]{Kaimanovich05a}
By the absolute continuity assumption the transition probability $\pi^x=\de_x P$ is absolutely continuous with respect to the corresponding Haar measure $\la^x$ for $\ka$-almost every $x\in X$, and, having fixed such an $x$, for $\la^x$-almost every $\gb\in \Gb^{\,x}$ the transition probability $\pi^\gb=\de_\gb P$ is also absolutely continuous with respect to~$\la^x$. Therefore, by \prpref{prp:02}(ii)
$$
\left\| (\de_\gb - \de_{\ov \gb}) PQ_n \right\| \to 0
$$
for $\lab\star\ka$-almost every $\gb\in \Gb$, i.e., the sequence $(PQ_n)$ satisfies the pointwise asymptotic invariance condition, whence $G$ is amenable by \prpref{prp:am}.
\end{proof}

\begin{rem}
The implication (iii)$\implies$(ii) is also very easy to verify directly because the Liouville property for the Markov chain with a transition operator $P$ is equivalent to the triviality of the tail boundary for the convex combination $\frac12(P+P^2)$, see \remref{rem:tail}.
\end{rem}

\section{Amenable actions} \label{sec:3}

\subsection{Action groupoid}

If $G:X\acts$ is a (left) group action, then the action space $X$ can be considered as the set of objects (``points'') of the associated \textsf{action groupoid} $\Gb$. Its morphisms are the ``arrows''
$$
\gb = \left( y \xleftarrow[]{\;g\;} x  \right)
$$
joining the \textsf{source} and the \textsf{target}
$$
\un\gb=x\in X \;, \qquad \ov\gb=y=gx\in X \;,
$$
respectively, and marked with the corresponding group element (\textsf{label})
$$
\lb{\gb}=g\in G \;,
$$
so that
$$
\lbb{\gb\gb\myp}=\lb{\gb}\cdot\lbb{\gb\myp}
$$
whenever the composition $\gb\gb\myp$ is well-defined (cf. \figref{fig:groupoid}).

In this situation the source of any morphism $\gb$ is uniquely determined by the target $\ov\gb\in X$ and the label $\lb\gb\in G$, so that one can identify the groupoid $\Gb$ with the product $G\times X$ by using the correspondence
\begin{equation} \label{eq:gbx}
\Gb\cong G\times X\;, \qquad  \gb \cong \left(\lb\gb,\ov\gb\right) \;,
\end{equation}
and under this identification
$$
\un{(g,x)} = g^{-1}x \;, \qquad \ov{(g,x)}=x \;, \qquad \lbb{(g,x)} = g \;.
$$
In particular, the target map $\gb\mapsto\ov\gb$ takes the form of the projection
$$
(g,x)\mapsto\ov{(g,x)}=x \;,
$$
which provides an identification of the fibres $\Gb^{\,x}$ of the target map with the group~$G$.

The composability condition for two pairs $(g,x)$ and $(g',x')$ is $x'=g^{-1}x$, and their composition is
\begin{equation} \label{eq:ggx}
(g,x) \left(g',g^{-1}x\right) = (gg', x) \;.
\end{equation}
Therefore,
$$
(g,x)\,\Gb^{\,g^{-1}x} = \,\Gb^{\,x} \;,
$$
and the left composition with a morphism $(g,x)$ on the fibre $\Gb^{\,g^{-1}x}\cong G$ amounts to multiplying the corresponding group elements on the left by $g$ and reattaching this copy of the group $G$ to the point $x$ instead of $g^{-1}x$.

\subsection{Actions with a quasi-invariant measure}

Let us now assume that $G$ is a locally compact second countable topological group, and $\la$ is a left Haar measure on it. The measure $\la$ then gives rise to the Haar system $\lab=\{\la^x\}$ on the action groupoid $\Gb$ obtained by endowing each fibre $\Gb^{\,x}\cong G$ with the measure $\la^x\cong\la$.

Further, let us assume that the action space $X$ is endowed with a measure $\ka$. Then the measure $\lab\star\ka$ on $\Gb\cong G\times X$ is nothing but the product measure $\la\otimes\ka$, and the quasi-invariance of the measure $\ka$ with respect to the action of the group $G$ means precisely that
$(\Gb,\lab,\ka)$ is a measured groupoid in the sense of the definition from \secref{sec:am}.

In view of the identification $\Gb^{\,x}\cong G$, a measurable target fibred system $\thb$ of probability measures on the action groupoid $\Gb$ is just a measurable map
$$
\thb: x\mapsto\th^x
$$
from the action space $X$ to the space $\Pc(G)$ of probability measures on the group $G$, or, for an absolutely continuous system, to the space $\Pca(G)$ of probability measures on $G$ absolutely continuous with respect to the Haar measure $\la$.

The discrepancy function \eqref{eq:fIDR} from the definition of amenability then takes the form
\begin{equation} \label{eq:df}
\gb \mapsto
\left\| \gb\,\theta^{\uun{\gb}} - \theta^{\ov{\gb}} \right\|
= \bigl\| g\,\theta^{g^{-1}x} - \theta^x \bigr\| \;, \qquad \gb=(g,x)\in\Gb \;.
\end{equation}
Therefore, \dfnref{dfn:reiter} applied to the action groupoid $(\Gb,\lab,\ka)$ means that the action of the group $G$ on the space $(X,\ka)$ is amenable if there exists a sequence of measurable maps
$$
\thb_n: x\mapsto\th_n^x \;, \qquad X \to \Pca(G) \;,
$$
which are \textsf{asymptotically equivariant} in the sense that the sequence of their discrepancy functions \eqref{eq:df} converges to 0 in the topology $\si(L^\infty,L^1)$ of the space $L^\infty(G\times X,\la\otimes\ka)$ (the integral strong asymptotic invariance condition).

\begin{rem}
By a change of variables the discrepancy functions \eqref{eq:df} in the above definition can be replaced with the functions
$$
(g,x) \mapsto \bigl\| g\,\theta^{x} - \theta^{gx} \bigr\| \;.
$$
\end{rem}

\subsection{Random walks in random environment}

For describing the ``local'' Markov chains on the fibres $\Gb^{\,x}\cong G$ arising from a ``global'' equivariant Markov chain on the action groupoid (cf. \remref{rem:fibre}) it is convenient to adopt the terminology of random walks in random environment.

The presence of a group structure on the state space of a Markov chain allows one to talk about the increments between consecutive states and to describe the chain in terms of their distributions. If $\{\pi^g\}_{g\in G}$ is a family of transition probabilities on a group $G$, then the \textsf{increment distribution} at a point $g\in G$ is the translate
$$
\mu^g = g^{-1}\pi^g \;,
$$
so that the Markov transition from $g$ consists in the (right) multiplication
$$
g\xrsquigarrow{h\sim\mu^g} gh
$$
by a $\mu^g$-distributed random increment $h$.

The \textsf{environment} (collection of the increment distributions) $\mub=\{\mu^g\}_{g\in G}$ can be considered as a $\Pc(G)$-valued configuration (or a field, in the continuous case) on the group $G$. The usual (right) random walk on $G$ with the step distribution~$\mu$ corresponds then to the constant environment $\mu^g\equiv\mu$. In the presence of a probability measure $\nu$ on the \textsf{space of environments} $\Ec(G)$ on a group $G$ one talks about the associated \textsf{random walk in random environment (RWRE)}, which amounts to first sampling an environment $\mub$ from the distribution $\nu$ and then performing the random walk on $G$ with the transition probabilities $\pi^g=g\mu^g$ determined by the environment $\mub$ \cite{Kaimanovich90a}. The group~$G$ naturally acts on the space of environments $\Ec(G)$ as
$$
(g\mub)^{g'} = \mu^{g^{-1}g'} \;, \qquad g,g'\in G\;,
$$
and the measure $\nu$ is usually assumed to be invariant or quasi-invariant with respect to this action.

Now, any measurable map $\thb: X\to\Pc(G)$ determines, by formulas \eqref{eq:gx} and~\eqref{eq:ggx}, the corresponding equivariant Markov chain on the action groupoid $\Gb$ with the transition probabilities
$$
\pi^{(g,x)} = g \, \th^{g^{-1}x}
$$
on the fibres $\Gb^{\,x}\cong G$. In other words, the restriction of the chain to each fibre $\Gb^{\,x}$ is the random walk in the environment $\mub=\env_\thb(x)$ defined by the map
\begin{equation} \label{eq:env}
\env_\thb: x\mapsto\mub=\{\mu^g\} \;, \qquad \mu^g = \th^{g^{-1}x} \;,
\end{equation}
from the action space $X$ to the space of environments $\Ec(G)$. This map is clearly $G$-equivariant, so that the quasi-invariance of the measure $\nu$ implies the quasi-invariance of its image $\env_\thb\nu$ on the space of environments.

We can now restate \thmref{thm:main} in the case of group actions in terms of random walks in random environment.

\begin{thm} \label{thm:actions}
Let $G$ be a second countable locally compact group endowed with a Haar measure $\la$. Then the following conditions on a measure class preserving action of the group $G$ on a measure space $(X,\ka)$ are equivalent:
\begin{enumerate}[{\rm (i)}]
\item
the action is amenable;
\item
there exists a measurable map $\thb:X\mapsto\Pca(G)$ such that the tail boundary of the random walk on the group $G$ in the associated random environment \eqref{eq:env} is almost surely $\Pb_\la$\textup{-(mod 0)} trivial;
\item
there exists a measurable map $\thb:X\mapsto\Pca(G)$ such that the random walk on the group $G$ in the associated random environment \eqref{eq:env} is almost surely Liouville;
\end{enumerate}
\end{thm}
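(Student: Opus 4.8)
The plan is to obtain \thmref{thm:actions} as a direct translation of \thmref{thm:main} through the dictionary between the action groupoid $\Gb \cong G\times X$ and the group $G$ set up at the beginning of this section. The first step is to record that, under the identification $\Gb^{\,x}\cong G$ of the target fibres with the group, a measurable target fibred system of absolutely continuous probability measures on $\Gb$ is \emph{precisely} a measurable map $\thb:X\to\Pca(G)$, and, by \prpref{prp:mf}, such systems are in canonical bijection with equivariant Markov chains on $\Gb$ having absolutely continuous transition probabilities. Thus conditions (ii) and (iii) are formulated in terms of exactly the same objects as the corresponding conditions of \thmref{thm:main}, and all that remains is to match the probabilistic content.

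Next I would identify the fibrewise chains. By formulas \eqref{eq:gx} and \eqref{eq:ggx} the equivariant chain determined by $\thb$ has transition probabilities $\pi^{(g,x)}=g\,\th^{g^{-1}x}$, so the increment distribution at a point $g$ of the fibre $\Gb^{\,x}\cong G$ is $g^{-1}\pi^{(g,x)}=\th^{g^{-1}x}=\mu^g$. This is exactly the environment $\env_\thb(x)$ of \eqref{eq:env}, whence the restriction of the global chain to $\Gb^{\,x}$ is the random walk on $G$ in the environment $\env_\thb(x)$. Moreover, under $\Gb^{\,x}\cong G$ the initial measure $\la^x$ of the fibrewise chain is carried to the Haar measure $\la$, so the measure on the path space is $\Pb_\la$.

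With these identifications in hand, fibrewise triviality of the tail boundary of the groupoid chain\,---\,triviality for $\ka$-almost every $x$ with respect to $\la^x$\,---\,becomes the almost sure $\Pb_\la$-(mod 0) triviality of the tail boundary of the random walk in the random environment $\env_\thb$, and likewise the fibrewise Liouville property becomes the almost sure triviality of the Poisson boundary. Hence (i)$\iff$(ii) and (i)$\iff$(iii) are nothing but the corresponding equivalences of \thmref{thm:main}, while (ii)$\iff$(iii) follows as it does there from the fact that the Poisson boundary is a quotient of the tail boundary, the converse being supplied by the passage to $\tfrac12(P+P^2)$ as in \remref{rem:tail}.

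The only point requiring genuine care\,---\,and the nearest thing to an obstacle\,---\,is the bookkeeping of the almost-everywhere quantifiers: one must check that ``for $\ka$-almost every fibre $\Gb^{\,x}$'' translates faithfully into ``for almost every environment'' in the random walk in random environment. This is guaranteed by the $G$-equivariance of the map $\env_\thb$ together with the quasi-invariance of $\ka$, which ensure that the relevant measure class on the space of environments is that of the image $\env_\thb\ka$. Once this is in place the entire statement reduces to \thmref{thm:main} with no further analysis.
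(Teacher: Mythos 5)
Your proposal is correct and follows essentially the same route as the paper: Section~\ref{sec:3} is itself the proof, consisting of exactly the dictionary you describe (target fibred systems on the action groupoid $\cong$ measurable maps $X\to\Pca(G)$, fibrewise chains on $\Gb^{\,x}\cong G$ with initial measure $\la^x\cong\la$ $\cong$ random walks in the environments $\env_\thb(x)$ of \eqref{eq:env}), after which \thmref{thm:actions} is obtained as the verbatim restatement of \thmref{thm:main}. Your extra care about matching the almost-everywhere quantifiers via the $G$-equivariance of $\env_\thb$ and the quasi-invariance of $\ka$ is a point the paper leaves implicit, but it is consistent with the paper's argument.
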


\bibliographystyle{amsalpha}
\bibliography{buehler-kaimanovich.bbl}

\end{document}